	\theoremstyle{plain}
		\newtheorem{mainthm}{\textsc{Theorem}}		
		\newtheorem{thm}{Theorem}[section]	
		\newtheorem{cor}[thm]{Corollary}	 
		\newtheorem{lem}[thm]{Lemma}		
		\newtheorem{prop}[thm]{Proposition}
	\theoremstyle{definition}
		\newtheorem{defn}[thm]{Definition}	
	\theoremstyle{remark}
		\newtheorem{rem}[thm]{Remark}		
		\newtheorem{note}[thm]{Notation}		
\numberwithin{equation}{section}	
\newcommand{\GL}{\mathrm{GL}}
\newcommand{\SSS}{\mathbf{S}}		
\newcommand{\R}{\mathbf{R}}	
\newcommand{\C}{\mathbf{C}}
\newcommand{\igeo}{\iota_{\textup{geo}}}		
\newcommand{\OO}{\mathrm{O}}
\newcommand{\trasp}[1]{{#1}^\mathsf{T}}	
\newcommand{\Lagr}{\Lambda}
\newcommand{\LP}{\mathcal{LP}}
\newcommand{\iCLM}{\iota^{\textup{CLM}}}		
\newcommand{\Mat}{\mathrm{Mat}}
\newcommand{\mmod}{\quad({\mathrm{mod}\ 2)}}
\newcommand{\Gr}{\mathrm{Gr}}
\newcommand{\St}{\mathrm{V}}
\newcommand{\im}{\mathrm{Ran}\,}
\newcommand{\dom}{\mathcal W\,}
\newcommand{\domtau}{\mathcal W_\tau\,}
\newcommand{\hil}{\mathcal H\,}
\newcommand{\hiltau}{\mathcal H_\tau\,}
\newcommand{\norm}[1]{\left\| #1 \right\|}		
\newcommand{\dueind}[3]{\iota\left( {#1},{#2};{#3} \right)}	
\newcommand{\iCLMnew}[3]{\iota^{\textup{CLM}}\left( {#1},{#2};{#3} \right)}	
\newcommand{\parity}[2]{\sigma\left( {#1},{#2} \right)}		
\newcommand{\N}{\mathbf{N}}		
\newcommand{\Z}{\mathbf{Z}}		
\DeclareMathOperator{\diag}{diag}		
\DeclareMathOperator{\sgn}{sgn}		
\renewcommand{\leq}{\leqslant}
\renewcommand{\geq}{\geqslant}
\renewcommand{\=}{\coloneqq}			
\newcommand{\email}[1]{\href{mailto:#1}{\textsf{#1}}}
\newcommand{\Id}{I}
\title{Bifurcation of heteroclinic orbits \\ via an\\  index theory}
\author{Xijun Hu\thanks{The author is partially supported by NSFC( No.11425105) 
and NCET.},  
Alessandro Portaluri
\thanks{The   
author is partially supported by the project ERC Advanced Grant 2013 
No.~339958 ``Complex Patterns for Strongly Interacting Dynamical Systems --- 
COMPAT”, by Prin 2015 ``Variational methods, with applications to 
problems in mathematical physics and geometry” No.~2015KB9WPT$\_$001 and 
by Ricerca locale 2015  ``Semi-classical trace formulas and their application 
in physical chemistry” No.~Borr$\_$Rilo$\_$16$\_$01 .} }
\date{\today}
\date{\today}
\begin{document}
 \maketitle
 
\begin{abstract}
Heteroclinic orbits for one-parameter families of nonautonomous vectorfields
appear in a very natural way 
in many physical applications. Inspired by  a recent bifurcation result for 
homoclinic trajectories of nonautonomous vectorfield 
proved by author in  \cite{Pej08}, we  define a new  $\Z_2$-index  and we construct a 
index theory for heteroclinic  orbits of nonautonomous vectorfield.  We prove an index theorem, by showing that,
under some standard transversality assumptions, the $\Z_2$-index is 
equal to the  {\em parity\/}, a  homotopy invariant for paths of  Fredholm operators of index 0. 
As a direct consequence  of the  index theory developed in this paper, we get a new 
bifurcation result for heteroclinic orbits.  

\vskip0.2truecm
\noindent
\textbf{AMS Subject Classification:} 34C37, 37C29, 47J15, 53D12, 70K44.
\vskip0.1truecm
\noindent
\textbf{Keywords:} $K$-theory, Index bundle, $\Z_2$-index, Parity of Fredholm operators, Heteroclinic 
orbits.
\end{abstract}


\tableofcontents


\section*{Introduction}

Index theory reveals is central role in many problems of Classical Mechanics 
like in the study of 
linear and spectral stability of periodic solutions of differential systems, 
in the investigations on the existence and 
multiplicity of  elliptic closed characteristics on energy hypersurfaces, in 
bifurcation theory etc. One-parameter 
families of vectorfields appear very naturally by 
linearizing a 1D partial differential 
evolution equation  along a  special solution like, for instance, a  travelling 
wave or a  steady solitary wave solution, etc. In all of these cases, in fact, the parameter 
actually is represented by the spectral parameter. 

In spite of the fact that in the Hamiltonian world many index theorems are 
available in the literature (cf. \cite{HP17} and references therein),  no results at all are 
known  in the  non-Hamiltonian case.
The most striking difference between the Hamiltonian and non-Hamiltonian case is 
that in the former there is a 
natural homotopy invariant which encodes the topology of the solutions space; it is 
defined in terms of the  fundamental solution of the 
Hamiltonian system known in 
literature as {\em Maslov-type index\/} (cf., for instance,  \cite{CLM94, RS93, LZ00} 
references therein) whilst in the 
latter case essentially no homotopy invariant were detected so far.
Nevertheless, some decades ago a $\Z_2$-homotopy invariant for paths of Fredholm 
operators of index 0 termed {\em parity\/} was defined in the non-Hamiltonian realm. 
(We refer the interested reader to  \cite{FP91a,FP91b,PR98}). 
In this respect, we have to mention that recently, authors in \cite{LZ00}  defined an integer-valued  
homotopy invariant  for path of essentially  hyperbolic operators by generalizing to this class of 
Fredholm operators the classical notion of spectral flow very well-known in the self-adjoint case. 
However, for this class of operators, no finite dimensional counterpart  (like the Maslov-type index) as 
been discovered. 

Inspired by the definition of the Evans function, one 
of the main purpose of this paper, is  to construct a $\Z_2$-homotopy invariant, in 
terms  of the determinant of a path of matrices naturally associated to 
an ordered pair of paths of  linear subspaces parametrized by a bounded interval.
To this pair of paths we naturally associate some  bundles (by pulling back the tautological bundle 
on the Grassmannian) and out of these  trivial bundles 
we construct a new bundle on $\SSS^1$ through the classical clutching procedure.

Our first main result Theorem \ref{thm:1.1} claims that, under suitable assumptions 
on the one-parameter family of nonautonomous 
vectorfields, the parity of the path of operators arising by linearizing 
the system 
along a solution, coincides with the $\Z_2$-index constructed in terms 
of the invariant unstable and stable subspaces. 

It is well-known that the parity plays a central in order to detect the bifurcation 
from the trivial branch 
(cf. \cite{FP91a, FP91b, PR98, Pej08}). A similar role  
is played by the spectral flows in the Hamiltonian case. (For further details, we refer the 
interested reader to  \cite{FPR99, PPT04, PP05, MPP07,PW13, 
PW14b} and references therein). 
As direct consequence of the  $\Z_2$ Index Theory  we prove a sufficient 
condition for detecting the  bifurcation along a trivial branch of heteroclinic orbits, 
in terms of the $\Z_2$-index.  We conclude by observing that the 
bifurcation result  proved in Theorem \ref{thm:1.2} is completely different in the essence from the 
main result, recently proved  by author in \cite{Pej08} in which the bifurcation was related to a 
non-trivial twist of the asymptotic stable and unstable bundles at infinity.


\section{Description of the problem and main results}\label{sec:intro}

Let $g$ be the one-parameter nonautonomous vectorfield defined 
by 
\[
g:[0,1]\times\R\times\R^n\ni (\lambda, t, z)\longmapsto g\big(\lambda, t, z\big)\in 
\R^n
\]
and we assume that $g$ as well as $ D_zg$ are bounded. Let $z_-, z_+ \in \R^n$ be two zeroes 
of $g$, meaning that 
$g(\lambda,t,z_-)=g(\lambda,t,z_-) =0$ for every $(\lambda, t) \in [0,1] \times \R$ 
and let us consider the 
first order differential system 
\begin{equation}\label{eq:1.1}
 \begin{cases}
  z'(t)- g(\lambda,t,z(t))=0, \qquad t \in \R\\
  \lim_{t \to -\infty}z(t)=z_- ,\quad  \lim_{t \to +\infty}z(t)=z_+. 
  \end{cases}
\end{equation}
We assume that  $z_\lambda$ is a solution of the system given in 
Equation \eqref{eq:1.1}. By linearising the vectorfield along $z_\lambda$, we get 
the following linear 
one-parameter family of first order systems 
\begin{equation}\label{eq:1.2}
 \begin{cases}
  u'(t)-S_\lambda(t)u(t)=0, \qquad t \in \R\\
  \lim_{t \to \pm\infty}u(t)=0 
  \end{cases}
\end{equation}
where we set $S_\lambda(t)\=  D_zg(\lambda,t,z_\lambda(t))$.
We introduce the following assumptions.
\begin{enumerate}
 \item[(A1)] The smooth family of matrices $S:[0,1] \times \R\to \Mat(n, \R)$ such 
that 
 $S_\lambda\=S(\lambda, \cdot): \R \to \Mat(n, \R)$ converges uniformly w.r.t.
$\lambda$ to families 
 \[
  S_\lambda^+= \lim_{t \to \infty} S_\lambda(t), \qquad S_\lambda^-=\lim_{t \to 
-\infty} S_\lambda(t), \quad 
  \lambda \in [0,1]. 
 \]
We assume that both $S_\lambda^+$ and $S_\lambda^-$ are hyperbolic, i.e.  the 
spectrum does not lie to the imaginary axis; namely
\[
\mathfrak{sp}(S^+_\lambda) \cap i\R= \mathfrak{sp}(S^-_\lambda) \cap i\R=\emptyset.
\]\footnote{
We recall that $T 
\in \Mat(n,\R)$ is termed 
{\em hyperbolic\/} if it has no eigenvalues on the imaginary axis. Thus in this 
case 
the spectrum  of a hyperbolic operator $T$ consists of two isolated closed 
components (one of 
which may be empty) 
\[
 \sigma(T) \cap \Set{z \in \C| \Re(z)<0} \textrm{ and } \sigma(T) \cap \Set{z 
\in \C| \Re(z)>0}.
\]}
\item[(A2)]  For $\lambda=0,1$, the system given in Equation \eqref{eq:1.2}  
only admits the 
trivial solution $u\equiv0$.
\item[(A3)]  For $\lambda\in [0,1]$, $\dim V^+(S_\lambda^-)=k$ and 
$\dim V^-(S_\lambda^+)=n-k$,  where $V^-$ and $V^+$ denote respectively the negative and positive 
spectral space with associated spectral projections 
\end{enumerate}
\begin{rem}\label{rem:importante}
It is worth noticing that assumption (A1) implies that  the families $\lambda 
\mapsto S^+_\lambda$ 
and $\lambda \mapsto S_\lambda^-$ are continuous.
\end{rem}
Let $\gamma_{(\lambda,\tau)}: \R \to \Mat(n,\R)$ be the  linear flow of the 
system given in Equation \eqref{eq:1.2}, i.e. the  (fundamental) matrix-valued  
solution of the linear initial value problem 
\begin{equation}\label{eq:fund-sol}
 \begin{cases}
\gamma'_{\lambda,\tau}(t)=S_\lambda(t)\, \gamma_{(\lambda,\tau)}(t), \qquad t 
\in 
\R\\
\gamma_{(\lambda,\tau)}(\tau)=\Id.
 \end{cases}
\end{equation}
We recall that the {\em stable\/} and {\em unstable\/} subspaces of the linear 
system given in 
Equation \eqref{eq:1.2} are  
\begin{equation}
E_\lambda^s(\tau)\=\Set{v \in \R^{2n}|\lim_{t\to 
+\infty}\gamma_{(\lambda,\tau)}(t)\, v=0} \textrm{ and }
E_\lambda^u(\tau)\=\Set{v \in \R^{2n}|\lim_{t\to 
-\infty}\gamma_{(\lambda,\tau)}(t)\, v=0}.
\end{equation}
By invoking \cite[Proposition 1.2]{AM03}, Assumption (A1), implies the 
following  uniform convergence result on the invariant manifolds
\[
 \lim_{\tau \to +\infty}E^s_\lambda(\tau)=V^-(S^+_\lambda) 
\textrm{ and }  
 \lim_{\tau \to -\infty} E^u_\lambda(\tau)=V^+(S^-_\lambda),
 \]
where the convergence is meant in the (gap-metric) topology of the  Grassmannian 
manifold; furthermore 
for any  fixed $\tau_0$,  $\lambda\mapsto E^s_\lambda(\tau_0)$ and $\lambda 
\mapsto  E^u_\lambda(\tau_0)$ 
are continuous. 
\begin{note}
We denote by $\pitchfork$ the transversality between two  linear subspaces 
meaning that their sum is the whole space. In symbols, if $V, W \subset \R^{n}$, 
$V \pitchfork W $ means that $ V+W=\R^n.$
\end{note}
Under the non-degeneracy   assumption  given in (A2), we get that 
\[
E^s_1(0)\pitchfork E^u_1(0) \textrm{ and  }
E^s_0(0)\pitchfork E^u_0(0).
\]
In particular the  $\Z_2$-index  $\dueind{E^s_1(0)}{ E^u_1(0)}{\lambda\in [0,1]}$ (cf. 
Definition \ref{def:z2index}), is well-defined.

We set $\dom\=W^{1,2}(\R,\R^n)$ and $\hil \=L^2(\R,\R^n)$. Under the  assumptions 
(A1) and (A3) it is well-known  (cf. 
\cite[Proposition 3.1]{Pej08}) that for 
each $\lambda \in [0,1]$, the operator 
\[
A_\lambda=\frac{d}{dt}-S_\lambda: \dom\longmapsto \hil
\]
is Fredholm of  index $0$. Moreover, by assumption (A2), the operators  $A_0, 
A_1$ are both 
invertibles. Thus it remains well-defined a homotopy invariant known in 
literature as   {\em parity\/}, namely 
$\parity{A_\lambda}{\lambda \in [0,1]} \in \widetilde{KO}([0,1], \partial([0,1]))\cong 
\Z_2$, where  
$ \widetilde{KO}$ denotes the reduced Grothendieck group. We refer  to \cite{FP91a,FP91b} 
and references therein, for further details. The first main result of the 
present paper is a sort of (mod 2) {\em   spectral flow formula\/}. 
\begin{mainthm} \label{thm:1.1}{\bf( A $\Z_2$-index theorem)\/} 
Under the assumptions (A1)-(A2) and (A3), the following equality holds
\begin{equation}\label{eq:1.3}
\parity{A_\lambda}{\lambda \in [0,1]}=\dueind{E^s_1(0)}{E^u_1(0)}{\lambda\in [0,1]}.
\end{equation}
  \end{mainthm}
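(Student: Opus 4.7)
The plan is to reduce both sides of \eqref{eq:1.3} to the same finite-dimensional computation, exploiting that both are $\Z_2$-valued homotopy invariants compatible with concatenation, so that it suffices to establish agreement at the level of determinants of a naturally associated finite-dimensional path.

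First I would perform a continuous finite-dimensional reduction of $A_\lambda$. The key observation is that evaluation at $0$, $u\mapsto u(0)$, identifies $\ker A_\lambda$ with $E^s_\lambda(0)\cap E^u_\lambda(0)$: a solution of $u'=S_\lambda(t)u$ belongs to $\dom$ if and only if it decays at both infinities, equivalently $u(0)\in E^s_\lambda(0)\cap E^u_\lambda(0)$. Introducing the finite-dimensional linear map
\[
L_\lambda\colon E^s_\lambda(0)\oplus E^u_\lambda(0)\longrightarrow \R^n,\qquad (v,w)\longmapsto v-w,
\]
one gets $\ker A_\lambda\cong \ker L_\lambda$ and, via a Green-function parametrix built from the stable and unstable spectral projectors of $S^\pm_\lambda$, a natural identification of the cokernels. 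Both identifications depend continuously on $\lambda$, and gluing with cut-off functions yields a continuous family of finite-dimensional reductions of the path $\{A_\lambda\}$ to $\{L_\lambda\}$.

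Second, I would invoke the reduction property of the parity (see \cite{FP91a, FP91b, PR98}): the parity is invariant under such continuous finite-dimensional reductions, hence
\[
\parity{A_\lambda}{\lambda\in[0,1]}=\parity{L_\lambda}{\lambda\in[0,1]}.
\]
Assumption (A2) together with Remark~\ref{rem:importante} guarantees $E^s_\lambda(0)\pitchfork E^u_\lambda(0)$ for $\lambda=0,1$, so $L_0$ and $L_1$ are invertible. In this finite-dimensional setting the parity reduces to the mod-$2$ sign change of $\det L_\lambda$ computed in any continuous trivialization of $\det\bigl(E^s_\lambda(0)\oplus E^u_\lambda(0)\bigr)$ over $[0,1]$.

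Third, I would unpack the definition of the $\Z_2$-index $\dueind{E^s_\lambda(0)}{E^u_\lambda(0)}{\lambda\in[0,1]}$: by construction it is the obstruction to orientability of the line bundle on $\SSS^1$ obtained by clutching the two trivial bundles that arise by pulling back the tautological bundle on the Grassmannian along $\lambda\mapsto E^s_\lambda(0)$ and $\lambda\mapsto E^u_\lambda(0)$, and this obstruction is precisely the mod-$2$ sign change of $\det L_\lambda$ identified in Step~2. Combining the two identifications yields \eqref{eq:1.3}. The main obstacle will be Step~1: the stable and unstable subspaces have constant dimensions, but the parametrix and the cokernel identification must be continuous in $\lambda$ even through the crossings at which $E^s_\lambda(0)\cap E^u_\lambda(0)\neq\{0\}$, and the resulting finite-dimensional reduction must be verified to lie in the class to which the reduction theorem for parity of \cite{FP91a, FP91b} applies.
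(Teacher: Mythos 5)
Your endgame (Step 3) and your finite-dimensional model are the right ones --- the map $L_\lambda(v,w)=v-w$ on $E^s_\lambda(0)\oplus E^u_\lambda(0)$ is, up to a constant sign, exactly the matrix whose determinant sign change defines $\dueind{E^s_\lambda(0)}{E^u_\lambda(0)}{\lambda\in[0,1]}$, and it is the same map the paper ultimately reaches. The genuine gap is Steps 1--2, i.e.\ precisely the part you defer: the claim that $\parity{A_\lambda}{\lambda\in[0,1]}=\parity{L_\lambda}{\lambda\in[0,1]}$ because the kernels and cokernels can be identified continuously in $\lambda$. Pointwise (even ``continuous'') identifications of kernels and cokernels do not determine the parity: the scalar paths $T_\lambda=\lambda-\tfrac12$ and $\widetilde T_\lambda=(\lambda-\tfrac12)^2$ have the same kernels and cokernels for every $\lambda$ and invertible endpoints, yet have parities $1$ and $0$. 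What the parity actually requires (cf.\ Definition \ref{def:parity}) is a single finite-dimensional subspace $V\subset\hil$ with $\im A_\lambda+V=\hil$ for \emph{all} $\lambda$, together with an identification of the induced bundle map $A_\lambda\colon A_\lambda^{-1}(V)\to V$ with $L_\lambda$ up to composition with continuous paths of isomorphisms; alternatively a genuine factorization $A_\lambda=N_\lambda(L_\lambda\oplus I)M_\lambda$ through paths of isomorphisms. Your ``Green-function parametrix glued with cut-offs'' produces a parametrix modulo compact errors, but it neither produces such a global $V$ nor the intertwining isomorphisms through the parameter values where $E^s_\lambda(0)\cap E^u_\lambda(0)\neq\{0\}$ (exactly where $A_\lambda$ degenerates), and there is no off-the-shelf ``reduction theorem for parity'' in \cite{FP91a,FP91b,PR98} that can be cited in the form you invoke. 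So the central content of the theorem is asserted rather than proved.

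For contrast, the paper circumvents this difficulty in two moves. First (Proposition \ref{thm:3.3}, via Lemmas \ref{thm:primo}--\ref{thm:quinto}) it truncates to $A_{\lambda,\tau}$ on $[-\tau,\tau]$ with boundary conditions in $E^u_\lambda(-\tau)$ and $E^s_\lambda(\tau)$, and shows by means of the extension/restriction operators $e_{\lambda,\tau}$ and $\chi_\tau$ that for $\tau$ large the two index bundles are isomorphic, hence the parities agree. Second, after rescaling to $[0,1]$ it uses the homotopy $\tau\to 0$ (legitimate because $\overline A_{\lambda,\tau}$ stays Fredholm with invertible ends) to reduce to $\overline A_{\lambda,0}=\tfrac{d}{ds}$ with boundary conditions $u(0)\in E^u_\lambda(0)$, $u(1)\in E^s_\lambda(0)$; for this operator the space of constants is a \emph{single} transversal $V\cong\R^n$ valid for every $\lambda$, the bundle $\overline A_{\lambda,0}^{-1}(V)$ consists of affine paths and is canonically $E^u_\lambda(0)\oplus E^s_\lambda(0)$, and the induced map is literally $(x,y)\mapsto y-x$ (Lemma \ref{lem:3.4}), whence the determinant computation you describe in Step 3. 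If you want to salvage your direct route, you must either construct the global transversal $V$ for the original path $A_\lambda$ and identify the induced bundle map with $L_\lambda$, or prove an intertwining factorization as above; either way you will be re-proving the analogue of Proposition \ref{thm:3.3} and Lemma \ref{lem:3.4}.
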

A direct application of Theorem \ref{thm:1.1} is in bifurcation theory since its non-triviality  is 
sufficient in order to 
detect bifurcation.  It is well-known, in fact,  that (cf. \cite[Theorem 6.1]{PR98}), that 
the nontrivial parity implies bifurcation from the trivial branch.
More precisely, a point $\lambda_* \in [0,1]$ 
is a {\em bifurcation point\/} for heteroclinic solutions of the system given in 
Equation \eqref{eq:1.1} from the trivial branch $\lambda \mapsto z_\lambda$, if there 
exists a sequence $(\lambda_k, z_k)_{k \in \N} \subset [0,1] \times \dom $, where 
$z_k$ are solutions of Equation \eqref{eq:1.1}, $z_k \neq z_\lambda$ and 
$(\lambda_k, z_\lambda) \to (\lambda_*, z_{\lambda_*})$.   
  \begin{mainthm}\label{thm:1.2}
Let $z_\lambda$ be a family of (heteroclinic) solutions of the system given in 
Equation \eqref{eq:1.1} where 
the restpoints $z_-$ and $z_+$ are hyperbolic. We assume that 
$\dueind{E^s_\lambda(0)}{ E^u_\lambda(0)}{\lambda \in [0,1]}\neq 0$. Then for all 
$\epsilon>0$ sufficiently small there exists a (nontrivial) 
solution $(\lambda,z)$ of the system given in Equation \eqref{eq:1.1} 
such that $\|z-z_\lambda\|_{W^{1,2}}=\epsilon$.
\end{mainthm}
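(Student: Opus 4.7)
The plan is to reduce Theorem \ref{thm:1.2} to an abstract bifurcation statement and then chain Theorem \ref{thm:1.1} with the classical parity--bifurcation principle \cite[Theorem 6.1]{PR98}. Because $z_\lambda$ joins two distinct rest points, it does not lie in $\dom$, so I work with the perturbation $u=z-z_\lambda$ and introduce
\[
F:[0,1]\times \dom\longrightarrow \hil,\qquad
F(\lambda,u)(t)\= u'(t)+z_\lambda'(t)-g\bigl(\lambda,t,z_\lambda(t)+u(t)\bigr).
\]
Since $z_\lambda$ solves \eqref{eq:1.1}, the trivial branch $(\lambda,0)$ consists of zeros of $F$, and a pair $(\lambda,z)$ with $z=z_\lambda+u$ and $u\in\dom$ is a heteroclinic solution of \eqref{eq:1.1} if and only if $F(\lambda,u)=0$; a direct computation identifies $D_uF(\lambda,0)=A_\lambda$.

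By the discussion preceding Theorem \ref{thm:1.1}, assumptions (A1) and (A3) make $A_\lambda$ a bounded Fredholm operator of index $0$ on $\dom\to\hil$, and the uniform bounds on $g$ and $D_zg$ together with a standard Nemitskii computation promote $F$ to a $C^1$ nonlinear Fredholm map of index $0$ along the trivial branch. Assumption (A2) makes $A_0$ and $A_1$ invertible, so $\parity{A_\lambda}{\lambda\in[0,1]}$ is well defined, and combining Theorem \ref{thm:1.1} with the standing hypothesis of Theorem \ref{thm:1.2} yields
\[
\parity{A_\lambda}{\lambda\in[0,1]}=\dueind{E^s_\lambda(0)}{E^u_\lambda(0)}{\lambda\in[0,1]}\neq 0.
\]

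With non-triviality of the parity along the trivial branch of a $C^1$ nonlinear Fredholm map of index zero, I apply \cite[Theorem 6.1]{PR98}: there is a bifurcation point $\lambda_*\in(0,1)$, and moreover near $(\lambda_*,0)$ the set of nontrivial zeros of $F$ contains a connected set meeting the trivial branch. Since the $W^{1,2}$-norm varies continuously on this set and takes arbitrarily small positive values, for every sufficiently small $\epsilon>0$ one extracts a pair $(\lambda,u)$ with $F(\lambda,u)=0$ and $\|u\|_{W^{1,2}}=\epsilon$. Setting $z=z_\lambda+u$ produces the nontrivial heteroclinic solution promised by Theorem \ref{thm:1.2}.

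The main obstacle I expect lies in the technical reduction: showing that $F$ is genuinely $C^1$ as a map $[0,1]\times \dom\to \hil$ along the trivial branch, with fibrewise derivative $A_\lambda$. The crucial point is the Nemitskii analysis of $u\mapsto g(\lambda,\cdot,z_\lambda+u)-g(\lambda,\cdot,z_\lambda)$ from $W^{1,2}(\R,\R^n)$ into $L^2(\R,\R^n)$, which relies on the global bound on $D_zg$ and the $L^\infty$ embedding of $W^{1,2}$ in one dimension to control the nonlinear remainder uniformly in $t\in\R$. Once this is in place, the rest of the proof is a clean concatenation of Theorem \ref{thm:1.1} and the parity bifurcation theorem.
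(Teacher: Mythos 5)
Your proposal is correct and follows essentially the same route as the paper, whose proof of Theorem \ref{thm:1.2} is exactly the concatenation of Theorem \ref{thm:1.1} with the parity--bifurcation principle of \cite[Theorem 6.1]{PR98}. The only difference is that you spell out the reduction (the substitution $u=z-z_\lambda$, the map $F$ with $D_uF(\lambda,0)=A_\lambda$, and the $C^1$ Nemitskii verification) which the paper leaves implicit.
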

For $\lambda =0,1$ the systems are termed  {\em boundary non degenerate\/}, 
if the following transversality condition holds
\[
V^-(S^+_0)\pitchfork V^+(S^-_0)  \textrm{ and } V^-(S^+_1)\pitchfork V^+(S^-_1). 
\footnote{It is worth noticing that in the case of homoclinic orbit this condition is equivalent 
to the hyperbolicity of the equilibrium point.}
\]
In this case it is possible to associate to the heteroclinic orbit  $z_0$ (resp. $z_1$) 
the  $\Z_2$-index, termed 
{\em geometrical parity\/}, $\igeo(z_0)$ 
that counts mod 2 the number of nontrivial intersections between the path of 
stable and unstable subspaces parametrized by $\tau \in \R$. 
(We refer the reader to Definition \ref{def:index-hetero} and 
Definition \ref{def:index-heteroclinic-vero} for the rigorous statements). As 
direct consequence of  the homotopy invariance of this index, we immediately get  
\begin{equation}\label{eq:1.4}
\dueind{E^s_1(0)}{E^u_1(0)}{\lambda\in[0,1]}\equiv\igeo(z_0)+\igeo(z_1)+\dueind{
V^-(S^+_\lambda)}{ V^+(S^-_\lambda)}{\lambda \in [0,1]} \mmod
\end{equation}
In the special case in which both paths $\lambda \mapsto V^-(S^+_\lambda)$ and $\lambda \mapsto
V^+(S^-_\lambda)$ are independent on $\lambda$, then $\dueind{
V^-(S^+_\lambda)}{ V^+(S^-_\lambda)}{\lambda \in [0,1]}=0$. Thus in this case the bifurcation is 
detected by the following  condition 
\[
 \igeo(z_0)\not \equiv\igeo(z_1)\mmod.
\]


\section{A new  index for heteroclinic 
orbits and the geometrical parity}\label{sec:spectral-flow-and-Maslov}

The aim of this Section is to define the $\Z_2$-index, the {\em geometrical 
parity\/} of a 
non-degenerate heteroclinic orbit as well as to listen their basic properties. 

We start by briefly recalling some basic facts about the Grassmannian and to fix our notations. 
(For  all of this we refer the interested reader to the beautiful book 
\cite[Chapter 5]{MS78}).  We denote by $\Gr_k(n, \R)$ the set of all $k$-dimensional  linear 
subspaces of $\R^n$. As homogeneous space, it is well-known that 
\[
 \Gr_k(n,\R)=\OO(n)/\big(\OO(k) \times \OO(n-k)\big)
\]
where $\OO$ denotes the orthogonal group. In particular,  $\Gr_k(n,\R)$ is a 
$k(n-k)$-dimensional compact smooth manifold 
(in general it has the structure of a smooth algebraic variety) whose 
topology  is induced by the gap-metric 
\[
 d(V,W) \= \norm{P_V-P_W}
\]
where $P_V$ and $P_W$ denote the orthogonal projections in $\R^n$ 
onto the subspaces $V$ and $W$, respectively. 
A $k$-frame in $\R^n$ is a $k$-tuple of linearly independent vectors of $\R^n$ 
and the collections of all 
$k$-frames form an open subset of the $k$-fold Cartesian product of $\R^n$ 
called the {\em Stiefel 
manifold\/} and denoted by $\St_k(n,\R)$. There is a canonical function 
$q: \St_k(n,\R) \to \Gr_k(n,\R)$ which maps each $k$-frame to the $k$-dimensional 
linear subspace it spans. 
By \cite[Lemma 5.1, pag.31-33]{MS78} we also 
get that the correspondence $X \to X^\perp$ which assign to each 
$k$-dimensional linear subspace the 
$(n-k)$-dimensional linear orthogonal subspace, defines a homeomorphism between 
the $\Gr_k(n,\R)$ and $\Gr_{n-k}(n,\R)$. We denote by  $\gamma^k(n,\R)$  the 
{\em  tautological line  bundle\/} (or {\em universal bundle\/}) over the Grassmannian 
manifold $\Gr_k(n,\R)$.  Let  $V\in \mathscr C^0\big([a,b],\Gr_k(n,\R)\big)$ and  $W\in \mathscr 
C^0\big([a,b],\Gr_{n-k}(n,\R)\big)$,  and we assume the following 
transversality condition at the endpoints 
\begin{equation}\label{eq:transversality}
V(a)\pitchfork W(a) \textrm{ and }  V(b)\pitchfork W(b).
\end{equation}
For every $t \in [a,b]$, let 
$\mathcal E_V(t)\= \{ v_1(t), \dots v_k(t)\}$ and $\mathcal E_W(t)\= 
\{w_{1}(t),\cdots,w_{n-k}(t)\}$ 
be two frames generating $V(t)$ and $W(t)$ respectively. 
We consider  $M \in \mathscr 
C^0\big([a,b], \Mat(n, \R)\big)$ 
whose columns are given by $v_j$ and $w_l$; i.e. 
\[
M(t)\=\Big[v_1(t)\big|\cdots  \big|v_k(t) \big|w_1(t)\big|\cdots \big| w_{n-k}(t)\Big].
\]
By the transversality assumption given in Equation \eqref{eq:transversality}, 
it readily follows that 
the endpoints of the path $M$, namely $M(a), M(b)$ are nondegenerate matrices (in the 
sense that the determinant of 
$M(a)$ and $M(b)$ do not vanish). Thus we are entitled to introduce the 
definition of the $\Z_2$-index. 
\begin{defn}\label{def:z2index} 
We  term {\em $\Z_2$-index of the pair $V$ and $W$\/}, the integer 
\begin{equation}\label{eq:eq:index-z2}
 \dueind{V(t)}{W(t)}{t \in [a,b]}\=
 \begin{cases}
                                       0 \quad \textrm{ if $\det\big( M(a)\cdot 
M(b)\big) >0$ }\\
                                       1 \quad \textrm{ if $\det\big( M(a)\cdot 
M(b)\big)<0$ }
                                      \end{cases}. 
\end{equation}
\end{defn}
\begin{lem}
The $\Z_2$-index given in Definition \ref{def:z2index} is well-posed. 
\end{lem}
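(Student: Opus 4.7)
The plan is to show that the sign of $\det\bigl(M(a)\cdot M(b)\bigr)$ does not depend on the choice of continuous frames $\mathcal E_V$ and $\mathcal E_W$ used to build the matrix $M$. As a preliminary step, I would note that continuous frames exist at all: since the interval $[a,b]$ is contractible, the pullbacks $V^*\gamma^k(n,\R)$ and $W^*\gamma^{n-k}(n,\R)$ of the tautological bundles are trivial, hence admit global continuous sections spanning each fibre. This guarantees that $M$ is well-defined as a continuous $\mathrm{Mat}(n,\R)$-valued path; the transversality at the endpoints (Equation \eqref{eq:transversality}) then forces $\det M(a)\neq 0$ and $\det M(b)\neq 0$.

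Next I would compare two admissible choices. Let $\mathcal E_V, \mathcal E'_V$ be two continuous frames of $V$, and $\mathcal E_W, \mathcal E'_W$ two continuous frames of $W$. For each $t\in[a,b]$ there exist unique change-of-basis matrices $A(t)\in \GL(k,\R)$ and $B(t)\in \GL(n-k,\R)$ such that
\[
\mathcal E'_V(t)=\mathcal E_V(t)\,A(t), \qquad \mathcal E'_W(t)=\mathcal E_W(t)\,B(t).
\]
Since the frames on both sides are continuous and pointwise linearly independent, $A$ and $B$ depend continuously on $t$ and take values in the respective general linear groups. Writing $M'$ for the matrix built from the primed frames, one obtains the block identity $M'(t)=M(t)\cdot \diag\bigl(A(t),B(t)\bigr)$, and consequently
\[
\det M'(t)=\det M(t)\cdot \det A(t)\cdot \det B(t).
\]

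Finally, I would exploit continuity: the scalar functions $t\mapsto \det A(t)$ and $t\mapsto \det B(t)$ are continuous and nowhere zero on $[a,b]$, so they have constant sign. In particular $\det A(a)\det A(b)>0$ and $\det B(a)\det B(b)>0$, whence
\[
\det\bigl(M'(a)\cdot M'(b)\bigr)=\det\bigl(M(a)\cdot M(b)\bigr)\cdot \det A(a)\det A(b)\det B(a)\det B(b)
\]
has the same sign as $\det\bigl(M(a)\cdot M(b)\bigr)$. This proves that the dichotomy in Equation \eqref{eq:eq:index-z2} is independent of the chosen frames, so $\dueind{V(t)}{W(t)}{t\in[a,b]}$ is well-defined. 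The only mildly delicate step is the existence of global continuous frames; everything else reduces to the elementary observation that $\GL$ has two connected components distinguished by the sign of the determinant.
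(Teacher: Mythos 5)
Your argument is correct and coincides with the paper's own proof: both reduce the comparison of two frame choices to a block-diagonal change-of-basis path $\diag\bigl(A(t),B(t)\bigr)\in\GL(k)\times\GL(n-k)$, use continuity and nonvanishing of its determinant to conclude the sign is constant, and deduce that $\sgn\det\bigl(M(a)M(b)\bigr)$ is frame-independent. Your preliminary remark on the existence of global continuous frames (triviality of the pullback bundles over the contractible interval $[a,b]$) is a small extra justification the paper leaves implicit, but it does not change the substance of the argument.
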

\begin{proof}
By a straightforward calculation, it readily follows that this  definition  is 
independent on the choice of the  frames. Let 
$\widehat{\mathcal E_V}$ and $\widehat{\mathcal E_W}$ 
be two continuous  frames for $V$ and $W$ respectively, pointwise 
given by 
$\widehat{\mathcal E_V}(t)\=\{\widehat v_1(t), \cdots \widehat v_k(t)\}$ and 
$\widehat{\mathcal E_W}(t)\=\{\widehat w_1(t), \cdots \widehat w_{n-k}(t)\}$  
and let us define the continuous path of  matrices 
\[
\widehat M(t)\=
\Big[\widehat v_1(t)\big|
\cdots \big| \widehat v_k(t)\big|\widehat w_1(t)\big| \cdots \big|
\widehat w_{n-k}(t)\Big].
\]
Thus, for every $t \in [a,b]$,  there exists $G_1(t) \in \GL(k)$ and $G_2(t) 
\in \GL(n-k)$ such that 
$\widehat M(t)=M(t)G(t)$ for $G(t)\= \diag\big(G_1(t), G_2(t)\big)$. In 
particular, 
$G(t)$ is  nondegenerate for every $t \in [a,b]$ and  
$\sgn\big(\det(G(t)\big)$ is independent 
on $t$. Thus  
\begin{multline}
\sgn\Big(\det\big( \widehat{M}(a)\cdot \widehat{M}(b)\big)\Big)=\sgn\Big(\det\big( 
M(a)\cdot M(b)\big)\Big)\sgn\Big( \det\big( G(a)\cdot G(b)\big)\Big)\\
= \sgn\Big(\det\big( 
M(a)\cdot M(b)\big)\Big).
\end{multline}
This conclude the proof.
\end{proof}
We now list some properties (omitting the proofs) of the $\Z_2$-index which are 
straightforward consequences of Definition \ref{def:z2index}.
\subsection*{\bf Properties of the $\Z_2$-index\/}

\begin{enumerate}
 \item[]{\bf Property I. (Reparametrisation Invariance)\/} 
 Let $\psi:[c,d] \to [a,b]$ be a  continuous function such that  $\psi(c)=a$ 
and $\psi(d)=b $, or $\psi(c)=b$ and $\psi(d)=a $ . 
 Then 
 \[ 
 \dueind{V(t)}{W(t)}{t \in [a,b]}= \dueind{(V\circ\psi)(t)}{(W\circ\psi )(t)}{t 
\in [a,b]}.
 \]
 \item[]{\bf Property II. (Homotopy invariance Relative to the Ends)\/}   Let 
\[
[0,1]\times [a,b] \ni (s,t)\mapsto (V_s(t), W_s(t))\in \Gr_k(n,\R)\oplus 
\Gr_{n-k}(n,\R)
\]
be a continuous two-parameter family subspaces such that $V_s(a)\pitchfork 
W_s(a)$ and $V_s(b)\pitchfork W_s(b)$.  Then 
\[ \dueind{V_0(t)}{W_0(t)}{t \in [a,b]}=\dueind{V_1(t)}{W_1(t)}{t \in [a,b]}.
\]
\item[]{\bf Property III. (Path Additivity)\/} If $c \in (a,b)$ such that 
$V(c)\pitchfork W(c)$ , then
\[
\dueind{V(t)}{W(t)}{ t\in[a,b]}\equiv \dueind{V(t)}{W(t)}{ t\in[a,c]}+
\dueind{V(t)}{W(t)}{t\in[c,b]} \quad \mmod
\]
\item[]{\bf Property IV. (Symmetry property)\/}   
\[
\dueind{V(t)}{W(t)}{t \in [a,b]}=\dueind{W(t)}{V(t)}{t \in [a,b]}.
 \]
 \item[]{\bf Property V. (Sum Additivity)\/} For $i=1,2$, let  $(V_i,W_i)\in 
\Gr_{k_i}(n_i, \R)\oplus \Gr_{n_i-k_i}(n_i,\R)$. Then 
 \begin{multline}
 \dueind{(V_1\oplus V_2)(t)}{(W_1\oplus W_2)(t)}{t \in 
[a,b]}\equiv\dueind{V_1(t)}{W_1(t)}{t \in [a,b]}\\
 + 
 \dueind{V_2(t)}{W_2(t)}{t \in [a,b]} \quad \mmod
 \end{multline}
 \end{enumerate}
\begin{rem}
We remark that the $\Z_2$-index defined above actually depends on 
the 
whole path and not just on its endpoints. 
\end{rem}
\begin{note}
 We denote by $\mathcal P([a,b],k,n)$ (resp. by $ \mathcal P^*([a,b],k,n)$) 
 the  space  of all ordered pairs  of 
 continuous paths of subspaces (resp. with transversal ends) 
 \[
  \mathcal P([a,b],k,n)=\{Z \in \mathscr C^0\big([a,b], \Gr_k(n,\R) \times 
\Gr_{n-k}(n,\R)\big)|
  Z(t)=\big(V(t), W(t)\big)\}
 \]
and we let 
\[
   \mathcal P^*([a,b],k,n)=\{Z \in \mathcal P([a,b],k,n)| V(a)\pitchfork W(a) 
\textrm{ and } 
   V(b)\pitchfork W(b)\}.
\]
\end{note}

\subsection{A Vector Bundle over the circle}

The aim of this subsection is to construct a vector bundle over $\SSS^1$ whose 
triviality is determined by the vanishing of the 
$\Z_2$-index. 
Given the ordered pair of subspace $(V,W) \in 
\mathcal{P}^*([0,1], k,n)$, we  define the 
path $\widetilde W: [0, 2]\to \Gr_k(n,\R)$ as follows
\[
 \widetilde W(t)= 
\begin{cases}
W(t) & \textrm{ for } t \in [0,1]\\
W(2-t) & \textrm{ for } t \in [1, 2].
\end{cases}
\]
\begin{rem}
 Actually the path $\widetilde W$ on the interval $[1,2]$  geometrically coincides with $W$ 
 (on the interval $[0,1]$) but travelled in the opposite direction.  
\end{rem}
\begin{lem}\label{thm:costruzione-V-tilde}
There exists a 
continuous path $\widetilde  V: [0,2] \to \Gr_k(n,\R) $ such that 
\begin{enumerate}
 \item $\widetilde V|_{[0,1]}=V$.
 \item $\widetilde V$ is closed, namely   $\widetilde V(0)=\widetilde V(2)$.
 \item For every $t\in [0,1]$ the following transversality condition holds 
 \[
  \widetilde V(t) \pitchfork \widetilde W(t) \qquad t \in [1, 2].
 \]
\end{enumerate}
\end{lem}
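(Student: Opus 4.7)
The plan is to construct $\widetilde V$ on $[1,2]$ so that it runs from $V(1)$ at $t=1$ to $V(0)$ at $t=2$ while staying transverse to $\widetilde W(t)=W(2-t)$, and then to glue with $V$ on $[0,1]$. The key geometric fact that makes this possible is that for any fixed $W_0\in\Gr_{n-k}(n,\R)$ the set
\[
\mathcal T(W_0)\=\Set{U\in\Gr_k(n,\R) | U\pitchfork W_0}
\]
is an affine chart of the Grassmannian: taking $W_0^\perp$ as basepoint, every $U\in\mathcal T(W_0)$ is the graph of a unique linear map $L:W_0^\perp\to W_0$, i.e.\ $U=\Set{v+Lv | v\in W_0^\perp}$. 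In particular $\mathcal T(W_0)$ is contractible, which makes convex interpolation available inside it.

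First I would produce, for $t\in[1,2]$, continuous orthonormal frames $\{e_1(t),\dots,e_k(t)\}$ of $\widetilde W(t)^\perp$ and $\{f_1(t),\dots,f_{n-k}(t)\}$ of $\widetilde W(t)$. Such global frames exist because $[1,2]$ is contractible, so the pullbacks of the tautological bundle along $\widetilde W|_{[1,2]}$ and along its orthogonal complement are both trivializable; concretely they can be obtained by Gram--Schmidt applied to any continuous local basis. Using these frames, each $U\in\mathcal T(\widetilde W(t))$ is encoded by a unique $(n-k)\times k$ real matrix $M$, namely the matrix of the defining linear map $L$, and the correspondence $(t,M)\leftrightarrow U$ is a homeomorphism between $[1,2]\times\R^{(n-k)\times k}$ and the open subset
\[
\Set{(t,U)\in[1,2]\times\Gr_k(n,\R) | U\pitchfork\widetilde W(t)}.
\]

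Let $M_1$ be the matrix representing $V(1)\in\mathcal T(W(1))$ and $M_2$ the one representing $V(0)\in\mathcal T(W(0))$; both are well-defined thanks to the endpoint transversality built into the hypothesis $(V,W)\in\mathcal P^*([0,1],k,n)$. I would then set $M(t)\=(2-t)M_1+(t-1)M_2$ for $t\in[1,2]$, declare $\widetilde V(t)$ to be the graph of the corresponding $L(t)$, and set $\widetilde V(t)\=V(t)$ for $t\in[0,1]$. The two pieces agree at $t=1$ because $\widetilde V(1)=V(1)$, the closedness property follows from $\widetilde V(2)=V(0)=\widetilde V(0)$, and the transversality in item (iii) is automatic, since $\widetilde V(t)$ is presented as a graph over $\widetilde W(t)^\perp$. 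The only genuinely delicate step is the continuous frame construction of Step~1; once that is in hand, everything else amounts to convex interpolation in a single affine chart of the Grassmannian.
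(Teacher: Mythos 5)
Your proof is correct, but it follows a genuinely different route from the paper's. The paper starts from an arbitrary path $\widehat V$ on $[1,2]$ joining $V(1)$ to $V(0)$ and, in case transversality with $\widetilde W$ fails along it, replaces it by a reparametrized copy of the path of orthogonal complements $t\mapsto \widetilde W(t)^\perp$, frozen on small intervals $[1,1+\epsilon]$ and $[2-\epsilon,2]$ and patched using the openness of transversality; the transverse path is thus obtained by perturbation rather than by an explicit formula, and the matching with the prescribed endpoint values $V(1)$ and $V(0)$ is the sketchiest point of that argument. You instead trivialize the bundles determined by $\widetilde W|_{[1,2]}$ and $\widetilde W^\perp|_{[1,2]}$ over the contractible interval, pass to graph coordinates over $\widetilde W(t)^\perp$, and join the matrices $M_1$, $M_2$ representing $V(1)$ and $V(0)$ by a straight segment: transversality in item (3) is then automatic, since any graph over $\widetilde W(t)^\perp$ is a complement of $\widetilde W(t)$, and the endpoint identities $\widetilde V(1)=V(1)$, $\widetilde V(2)=V(0)$ hold exactly, giving (1)--(3) at once. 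What your approach buys is precisely the endpoint-matching step that the paper handles only implicitly; the price is the appeal to global continuous (orthonormal) frames for $\widetilde W$ and $\widetilde W^\perp$ over $[1,2]$, which is the one point you should justify explicitly --- e.g.\ by the standard fact that every vector bundle over an interval is trivial, or concretely by covering $[1,2]$ with finitely many subintervals on which Gram--Schmidt applied to $P_{\widetilde W(t)}$ of a fixed frame works and gluing the resulting trivializations. Once the frames are in hand, the continuity of $(t,M)\mapsto \widetilde V(t)\in\Gr_k(n,\R)$ and of the glued path at $t=1$ is immediate, so your argument is complete and, if anything, tighter than the published one.
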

\begin{proof}
We start to define the 
path $\widetilde V:[0,2] \to \Gr_k(n,\R)$ 
\begin{equation}\label{eq:de-tilde-V}
 \widetilde V(t)= 
\begin{cases}
V(t) & \textrm{ for } t \in [0,1]\\
\widehat V(t) & \textrm{ for } t \in [1, 2]
\end{cases}
\end{equation}
where $\widehat V$ is such that $\widehat V(1)= V(1)$ and $\widehat V(2)= 
V(0)$.  Clearly the path $\widetilde  V$ 
given in Equation \eqref{eq:de-tilde-V} is closed, being $\widetilde V(0)=V(0)$ 
and $\widetilde V(2)= \widehat 
V(2)= V(0) $. From the definition, it holds also that 
$\widetilde V|_{[0,1]}=V$; furthermore 
it is easy to check that  $\widetilde V(1) \pitchfork \widetilde W(1)$ and 
$\widetilde V(2) \pitchfork \widetilde W(2)$. These last two facts readily 
follows from the definitions of 
$\widetilde V$ and $\widetilde W$ and from the fact that the ends of the two 
paths $V$ and $W$ are transversal. Now,  
if $\widetilde V(t) \pitchfork \widetilde W(t)$ for all $t \in [1,2]$ the 
result follows. 
If not, it just enough to observe that the path $\widetilde W^\perp:[1, 2] 
\to \Gr_k(n,\R)$ pointwise 
given by the orthogonal complement to $W$ clearly satisfies the following 
transversality condition
 \[
   \widetilde W^\perp (t) \pitchfork \widetilde W(t) \qquad t \in [1,2].
 \]
Thus, if $\widetilde V(1)= \widetilde W^\perp(1)$ and $\widetilde V(2)= 
\widetilde W^\perp(2)$, it is 
just enough to define  $\widehat V(t)= \widetilde W^\perp(2-t)$ for all $t\in 
[1,2]$. Otherwise, we reduce to the 
previous situation as follows. For $\epsilon \in (0,1)$, let us consider the 
continuous path
\begin{equation}
\widehat V(t)\=\begin{cases}
\widetilde W^\perp(1+\epsilon)  \textrm{ for all } t\in 
[1,1+\epsilon]\\
\widetilde W^\perp(2-t) \textrm{ for all } t\in 
[1+\epsilon,2-\epsilon]\\
\widetilde W^\perp(2-\epsilon)  \textrm{ for all } t\in 
[2-\epsilon,2 ].
\end{cases} 
\end{equation}
By choosing $\epsilon>0$ sufficiently small and observing that the transversality is an open condition, 
the result readily follows. This conclude the proof. 
\end{proof}
By passing to the quotient  of $[0,2]$ with respect to its boundary, the 
function $\widetilde V:[0,2] \to \Gr_k(n,\R)$ induces a map, that with a slight abuse of notation, we 
still will denote by the same symbol, 
$\widetilde V: \SSS \to \Gr_k(n,\R)$ for $\SSS\=\R/(2\, \Z)$. 
Let $\pi: E\big(\gamma^k(n,\R)\big)\to 
\Gr_k(n,\R)$ be  denote the (standard) tautological bundle projection onto its first factor. 
In shorthand notation we set $E(\gamma^k_n)\=E\big(\gamma^k(n,\R)\big)$. 
We now consider the pull-back bundle  of $E(\gamma^k_n)$ through 
$\widetilde V$; thus we have the following  commutative diagram
\[
\xymatrix{
{\widetilde V}^*\big(E(\gamma^k_n)\big) 
\ar[r]^-{\mu} \ar[d]_-{{\widetilde V}^*(\pi)} & E(\gamma^k_n) \ar[d]_{\pi}    \\
\SSS\ar[r]_-{\widetilde V} & \Gr_k(n,\R)
}
\]
where as usually, $\widetilde V^*(\pi)$ denotes the pull-back projection $\pi$ through $\widetilde V$.
The next  result gives a necessary and sufficient condition on the triviality 
of the  pull-back of the tautological bundle induced by $\widetilde V$ in terms of the 
triviality of 
the $\Z_2$-index. 
\begin{lem}\label{lem2.1}
The vector bundle constructed by pulling back the tautological bundle through $\widetilde V$ 
is trivial if only if 
\[
\dueind{V}{W}{t\in [0,1]}=0.
\]
\end{lem}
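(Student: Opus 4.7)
The plan is to exploit the well-known classification of real rank-$k$ vector bundles over $\SSS^1$: such a bundle is trivial if and only if it is orientable, if and only if it admits a continuous global frame. Equivalently, since $[0,2]$ is contractible the pull-back of $\widetilde V^*(E(\gamma^k_n))$ to the covering interval $[0,2]$ is trivial, and the bundle over the quotient $\SSS = [0,2]/\{0 \sim 2\}$ is trivial if and only if the automorphism of the fiber $V(0)$ that identifies the two ends lies in the identity component of $\GL(k,\R)$, i.e.\ has positive determinant.

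Concretely, I would choose a continuous $k$-frame $\mathcal E_V(t)=\{v_1(t),\dots,v_k(t)\}$ of $\widetilde V(t)$ for $t\in[0,2]$, and a continuous $(n-k)$-frame $\mathcal E_W(t)=\{w_1(t),\dots,w_{n-k}(t)\}$ of $\widetilde W(t)$ for $t\in[0,2]$. By the very definition $\widetilde W(t)=W(2-t)$ on $[1,2]$, so I can (and do) fix $\mathcal E_W$ to be its own time-reversal on $[1,2]$; in particular $\mathcal E_W(0)=\mathcal E_W(2)$. Since $\widetilde V(0)=\widetilde V(2)=V(0)$, there exists a unique $G\in\GL(k,\R)$ with $\mathcal E_V(2)=\mathcal E_V(0)\cdot G$, and the discussion of the previous paragraph gives: the pull-back bundle is trivial iff $\det G>0$. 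A short check shows $\sgn\det G$ is independent of the chosen frames, as any change of frame $\mathcal E_V\cdot P(t)$ with $P\in\mathscr C^0([0,2],\GL(k,\R))$ replaces $G$ by $P(0)^{-1}GP(2)$, and $\det P(0)$ and $\det P(2)$ have the same sign.

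Now I would link $\sgn\det G$ to the $\Z_2$-index. Form the matrix path
\[
M(t)\=\bigl[v_1(t)\mid\cdots\mid v_k(t)\mid w_1(t)\mid\cdots\mid w_{n-k}(t)\bigr],\qquad t\in[0,2].
\]
Since $\mathcal E_W(2)=\mathcal E_W(0)$, one has $M(2)=M(0)\cdot\diag(G,I_{n-k})$, so $\det M(2)=\det G\cdot\det M(0)$. By the key transversality property from Lemma \ref{thm:costruzione-V-tilde}, $\widetilde V(t)\pitchfork\widetilde W(t)$ for every $t\in[1,2]$, hence $\det M$ never vanishes on $[1,2]$ and $\det M(1)\cdot\det M(2)>0$. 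Multiplying by $\det M(0)$ gives $\det M(0)\cdot\det M(1)$ and $\det M(0)\cdot\det M(2)=\det G\cdot(\det M(0))^{2}$ of the same sign, so
\[
\sgn\bigl(\det M(0)\cdot\det M(1)\bigr)=\sgn(\det G).
\]
By Definition \ref{def:z2index} the left-hand side is positive precisely when $\dueind{V}{W}{t\in[0,1]}=0$, which yields the claimed equivalence.

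The main obstacle is a packaging one rather than a deep one: I must be careful that (i) the frame $\mathcal E_W$ can be chosen with matching endpoints (which is exactly why the auxiliary path $\widetilde W$ was built as a forward-then-backward concatenation), and (ii) the sign of $\det G$ really is a well-defined bundle invariant, independent of all the choices of frames on the contractible cover $[0,2]$. Once these are settled, the equivalence between triviality of the clutching datum in $\pi_0(\GL(k,\R))\cong\Z_2$ and the sign relation above is immediate.
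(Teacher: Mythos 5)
Your proof is correct, and its core is the same as the paper's: triviality of the pull-back bundle over $\SSS$ is read off from the sign of a determinant built from frames of $\widetilde V$ and a complement, using the transversality $\widetilde V(t)\pitchfork\widetilde W(t)$ on $[1,2]$ from Lemma \ref{thm:costruzione-V-tilde}. The packaging, however, differs in a useful way. The paper first invokes the formal properties of the $\Z_2$-index (path additivity plus homotopy invariance relative to the ends) to pass from $\dueind{V}{W}{t\in[0,1]}$ to $\dueind{\widetilde V}{\widetilde W}{t\in[0,2]}$ and then to $\dueind{\widetilde V}{\widehat W}{t\in[0,2]}$ with the constant path $\widehat W\equiv W(0)$, and only then asserts --- as ``immediate to observe'' --- that triviality of ${\widetilde V}^*\big(E(\gamma^k_n)\big)$ is equivalent to $\det\big(M(0)\cdot M(2)\big)>0$. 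You bypass both reductions: you keep $\widetilde W$ with its time-symmetric frame (so that $\mathcal E_W(0)=\mathcal E_W(2)$), extract the clutching matrix $G\in\GL(k,\R)$ from $\mathcal E_V(2)=\mathcal E_V(0)\cdot G$, check that $\sgn\det G$ is frame-independent, and transport the sign from $t=2$ to $t=1$ through the nonvanishing of $\det M$ on $[1,2]$, obtaining $\sgn\big(\det M(0)\det M(1)\big)=\sgn\det G$ directly from Definition \ref{def:z2index}. What your route buys is self-containedness and rigor at exactly the point the paper glosses over: the identification of triviality over $\SSS$ with positivity of the clutching determinant (classification of rank-$k$ real bundles over the circle by $\pi_0(\GL(k,\R))\cong\Z_2$) is made explicit, and no appeal to the (unproved, listed) properties of the index is needed; what the paper's route buys is a shorter argument that showcases how those formal properties let one normalize $\widetilde W$ to a constant path before computing.
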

\begin{proof} We start to observe that as direct consequence of third property 
stated in Lemma 
\ref{thm:costruzione-V-tilde} as well as by the homotopy invariance of the 
$\Z_2$-index with 
respect to its ends, we get that 
\begin{equation}\label{eq:triv-1}
\dueind{V}{W}{ t\in [0,1]}=\dueind{\widetilde V}{\widetilde W}{ t\in [0,2]}.
\end{equation}
We define the constant path $\widehat W:[0,2] \to \Gr_k(n,\R)$ 
as follows $\widehat{W}(t)\equiv W(0) $ and again as consequence of  the homotopy 
invariance property of the $\Z_2$-index, we have
\begin{equation}\label{eq:triv-2}
\dueind{\widetilde V}{\widetilde W}{ t\in [0,2]}=\dueind{\widetilde 
V}{\widehat W}{ t\in [0,2]}.
\end{equation}
We consider the $k$-frame $\mathcal F_V(t)=\{e_{1}(t)),\cdots,e_{k}(t)\}$ for the 
subspace $V(t)$ and the (constant)  $(n-k)$-frame 
$\mathcal F_{W(0)}=\{ e_{k+1},\cdots,e_{n}\}$  for $W(0)$.
As before, we  define the $n \times n$  matrix 
\[
M(t)\=\Big[e_{1}(t)\big|\cdots\big|e_{k}(t)\big| e_{k+1}\big|\cdots\big|e_{n} \Big].
\]
It is immediate to observe that  the vector bundle over $\SSS$, namely  
${\widetilde V}^*(\pi):{\widetilde V}^*\big(E(\gamma^k_n)\big) \to \SSS$ is 
trivial  if and only if $\det\big(M(0)\cdot M(2)\big)>0$, which is 
equivalent to state 
that $\dueind{\widetilde V}{\widehat W}{ t\in [0,2]}=0$. Now, the conclusion 
readily follows by invoking  
Equations \eqref{eq:triv-1}-\eqref{eq:triv-2}. 
\end{proof}


\subsection{A new  index for heteroclinic orbits of nonautonomous vectorfields}

This subsection is to define a $\Z_2$-index  in the case of 
heteroclinic orbits of  a one-parameter family of nonautonomous systems. 
We start by setting   $n=2k$ and to consider the  symplectic real vector  
space $(\R^{2k},\omega)$ where $\omega$ is the standard symplectic form.
We denote by $\Lagr(k)$ the Lagrangian Grassmannian manifold, namely the set 
of all Lagrangian subspaces of $(\R^{2k},\omega)$.  It is well-known that it is  a 
real 
compact and connected analytic $\frac12 k(k+1)$-dimensional submanifold of the 
Grassmannian manifold $\Gr_k(2k,\R)$. 
\begin{note}
 We denote by $\LP([a,b],k)$ the space  of all ordered pairs 
of continuous paths of Lagrangian subspaces 
 \[
  \LP([a,b],k)\=\Big\{Z \in \mathscr C^0\big([a,b], \Lagr(k) \times 
\Lagr(k) \big)| Z(t)=\big(V(t), W(t)\big)\big)\Big\}
 \]
and we let 
\[
   \LP^*([a,b],k)\=\Big\{Z \in \LP([a,b],k)| V(a)\pitchfork W(a) 
\textrm{ and } V(b)\pitchfork W(b)\Big\}.
\]
\end{note}
To each pair ordered pair of paths of Lagrangian subspaces  
\[
(V,W):[a,b] \ni t \mapsto \big(V(t), W(t)\big) \in  \Lagr(k),
\]
we associate 
a homotopy invariant known in literature as {\em Maslov index\/}, 
that will be denoted by 
\[
\iCLMnew{V(t)}{W(t)}{t \in [a,b]}.
\]
(We refer the 
interested  reader to   the beautiful papers \cite{CLM94,RS93}.)  
\begin{prop}\label{prop:2.1} 
Let $(V,W) \in  \LP^*([a,b],k)$. Then we have
\[ 
\dueind{V(t)}{W(t)}{t \in [a,b]}\equiv\iCLMnew{V(t)}{W(t)}{t \in [a,b]} \quad \mmod
. 
\]
\end{prop}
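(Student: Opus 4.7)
The plan is to exploit the common axiomatic structure of the two invariants: both the $\Z_2$-index (by Properties I--III above) and the CLM Maslov index enjoy reparametrisation invariance, homotopy invariance relative to the endpoints, and path additivity. I will show that both reduce, modulo $2$, to counting generic crossings of a representative path.

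First, I would replace $(V,W)$ by a generic representative. Using the standard Arnol'd stratification of the Maslov cycle $\Sigma_W\subset\Lagr(k)$ --- whose top stratum $\Sigma^1_W$, consisting of Lagrangians meeting $W$ in a line, has codimension one in $\Lagr(k)$, while the deeper strata $\Sigma^j_W$ with $j\geq 2$ have codimension $\geq 3$ --- I would homotope $(V,W)$ rel endpoints (keeping $V(a)\pitchfork W(a)$ and $V(b)\pitchfork W(b)$) so that $t\mapsto (V(t),W(t))$ meets the Maslov cycle transversally. The resulting path has only finitely many interior crossings $t_1<\cdots<t_N$, and at each of them $\dim V(t_j)\cap W(t_j)=1$ with non-degenerate crossing form. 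Transversality of the endpoints is preserved since it is an open condition.

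Next I would localize. By path additivity (Property III for the $\Z_2$-index and its analogue in \cite{CLM94,RS93} for the Maslov index), both sides of the claimed identity split as sums of contributions over the intervals $[t_j-\delta,t_j+\delta]$, so it suffices to verify the congruence for a short subinterval carrying a single dimension-one transverse crossing with transversal endpoints.

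On such a subinterval, the CLM Maslov index contribution is $\pm 1$, namely the sign of the crossing form on the line $V(t_j)\cap W(t_j)$; in particular it is $\equiv 1\mmod$. For the $\Z_2$-index, I pick smooth frames $\mathcal{E}_V(t)=\{v_1(t),\dots,v_k(t)\}$ and $\mathcal{E}_W(t)=\{w_1(t),\dots,w_k(t)\}$, form the matrix
\[
M(t)=\bigl[v_1(t)\,|\,\cdots\,|\,v_k(t)\,|\,w_1(t)\,|\,\cdots\,|\,w_k(t)\bigr],
\]
and analyse $d(t)\=\det M(t)$ near $t_j$. Since $\dim(V(t_j)\cap W(t_j))=1$ the matrix $M(t_j)$ has corank exactly one, and a short linear-algebra computation --- expressing $d'(t_j)$ in terms of the value of the crossing form on a nonzero generator of $V(t_j)\cap W(t_j)$ --- shows that the transverse (non-degeneracy) hypothesis forces $d'(t_j)\neq 0$. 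Hence $d$ has a simple zero at $t_j$, so $d(t_j-\delta)$ and $d(t_j+\delta)$ have opposite signs and by Definition \ref{def:z2index} the local $\Z_2$-index equals $1$. Summing over the $N$ crossings via Property III shows that both sides of the stated equivalence count $N\mmod$, completing the argument.

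The main obstacle is the last bullet: proving that at a transverse dimension-one crossing, $\det M$ has a simple zero and that its derivative at $t_j$ is (up to a nonzero factor) the value of the crossing form on a generator of $V(t_j)\cap W(t_j)$. This is the only place where the symplectic structure (and not merely the pair of Grassmannians) enters; the Lagrangian hypothesis is what lets one interpret the leading Taylor coefficient of $\det M$ through $\omega$ and thereby tie the sign change of $d(t)$ to non-degeneracy of the crossing form.
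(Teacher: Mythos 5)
Your plan is correct and follows essentially the same route as the paper's proof: perturb to finitely many simple transverse crossings, localize by path additivity, and match the local contributions, with the local $\Z_2$-contribution read off from a sign change of the frame determinant. The step you flag as the main obstacle is resolved in the paper by choosing graph coordinates $V(t)=\Gr\big(A(t)\big)$, $W(t)=\Gr\big(B(t)\big)$ with $A,B$ symmetric, so that the frame determinant equals $\det\big(B(t)-A(t)\big)$ and, by Kato's selection theorem, exactly one eigenvalue changes sign at the crossing — which is precisely the non-degeneracy of the crossing form you invoke.
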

\begin{proof}
Following authors in \cite[Section 4]{CLM94}, up to a slight perturbation, 
we can assume that the crossing instants (intersections) between the paths $V$ 
and $W$ are simply, meaning that 
they are 1-dimensional and transversal. By codimensional arguments, this is 
generically true, and by 
the invariance property of  the index $\iCLM$ (with free endpoints in the case 
of transversal ends), 
is actually independent on this choice.  We assume that 
$t_0 \in (a,b)$ is a crossing 
instant such that $\dim\big(V(t_0)\cap W(t_0)\big)=1$. There exists 
$\varepsilon 
>0$ such that for 
$t \in [t_0 -\varepsilon, t_0+ \varepsilon)]$, $V(t)= \Gr\big(A(t)\big)$ and 
$W(t)= \Gr\big(B(t)\big)$ where $A$ 
and $B$ are smooth path of symmetric matrices (generating the Lagrangian 
subspaces). In this case, following authors in \cite[Theorem 3.1]{LZ00} and 
\cite[Section 3]{RS93}, we get that 
\[
\iCLMnew{V(t)}{W(t)}{t \in [a,b]}= \sum_{t_0 \in (a,b) } \sgn \Gamma( W ,V ,t_0)
\]
where $\Gamma$ denotes the (non-degenerate) crossing (quadratic) form on 
$W(t)\cap V(t)$,  
$\sgn$ denotes  the sign and the sum runs all over the crossing instants which, 
by the non-degeneracy 
assumption   are isolated; thus  on a compact 
interval are in a finite number. (Cf. \cite[Definition 3.2]{RS93}, for further details).
In order to conclude the proof, it is 
enough to prove that the 
local contribution to the $\iCLM$ as well as to $\iota$ coincide. By a direct 
computation 
it follows that the crossing form at the crossing instant $t_0$ is given by 
\begin{equation}
\Gamma\big(W,V, t_0\big) : \ker\big(B(t_0) - A(t_0)\big) \longrightarrow\R: 
u \longmapsto \Gamma\big(W,V, t_0\big)[u]= \langle \big[\dot B(t_0)-\dot A(t_0)\big] u, u 
\rangle. 
\end{equation}
By invoking Kato's selection theorem, 
$B(t)\cong \diag\big(\lambda_1(t), \dots, \lambda_k(t)\big)$ where, for $j 
=1, \dots k$,  
$\lambda_j(t)$ represent the repeated eigenvalues of $B(t)-A(t)$ according to 
its own multiplicity.  

Since $\dim\ker\big(B(t)-A(t)\big)=1$, there exists only one  
changing-sign eigenvalue at $t_0$; let's say $\lambda_i$.   Thus, we get that 
\begin{equation}
\sgn\Gamma\big(W,V, t_0\big)
= \begin{cases}
1 &  \textrm{ if  } \dot \lambda_i(t_0)>0\\
-1 & \textrm{ if  } \dot \lambda_i(t_0)<0
\end{cases}      
\end{equation}
To conclude the proof we  now define the matrix 
\begin{equation}
 M(t)\= \begin{bmatrix}
         \Id & \Id \\
         A(t)& B(t)
        \end{bmatrix}
\end{equation}
having nullity (i.e. dimension of the kernel) precisely 1 and let $C$ be the block upper 
triangular matrix defined by 
$
C\= \begin{bmatrix}
         \Id & -\Id \\
         0 & \Id
        \end{bmatrix}
$. We observe 
\begin{equation}
M(t) \cdot C\= \begin{bmatrix}
         \Id & 0\\
         A(t) & B(t)-A(t)
        \end{bmatrix}; \quad  \textrm{ thus  we get } 
\end{equation}
$\det \big(M(t)C\big)= \det\big( M(t)\big)= \det 
\big(B(t)-A(t)\big)$. In particular,
 $\ker M(t_0)= \ker\big(B(t_0)-A(t_0)\big)$ (which is $1$-dimensional). By 
this arguments and by taking 
into account Definition \ref{def:z2index}, we get that 
\begin{equation}
\dueind{V(t)}{W(t)}{t \in [t_0-\varepsilon, t_0+ \varepsilon]}= 1
\end{equation}
This conclude the proof. 
\end{proof}
From now on, we assume that $[a,b]$ is an unbounded interval (either $a=-\infty$ 
or $ b=+\infty$). Thus we have the following three kind of unbounded intervals,  
namely 
$(-\infty, b]$, $[a,+\infty)$ and finally $(-\infty,+\infty)$. 
We assume that there exists $T>0$ such that $V(t) \pitchfork W(t)$ for every 
$t \leq -T$,  $t \geq T$ and finally $|t| \geq T$, in the first, second and finally in the  third case  respectively. 
\begin{defn}\label{def:z2indexTT}
Under the previous notation, we define  the $\Z_2$-index   as 
follows: 
\begin{multline}\label{eq:index-tagliato}
 \dueind{V(t)}{W(t)}{t \in (-\infty, b] }\= \dueind{V(t)}{W(t)}{t \in [-T,b]}\\ 
 \dueind{V(t)}{W(t)}{t \in [a,+\infty)}\= \dueind{V(t)}{W(t)}{t \in [a,T]}\\
  \dueind{V(t)}{W(t)}{t \in (\infty, +\infty) }\= \dueind{V(t)}{W(t)}{t \in [-T,T]}.
\end{multline}
\end{defn}
\begin{rem}
Directly by the definition and by the path additivity property of the $\Z_2$-index, 
it readily follows that the  it is well-defined in the sense that  it is 
independent on $T$.
\end{rem}
Let $S: \R \to \Mat(n,\R)$ be  a a continuous path of matrices and we assume 
that there exist 
$S^{\pm}$ which are hyperbolic and such that 
\[
\lim_{t\to\pm\infty}S(t)=S^\pm
\]
For $\tau \in \R$, we let $\gamma_\tau: \R \to \Mat(n,\R)$ be the associated 
matrix-valued solution such that 
$\gamma_\tau(\tau)=\Id$, and  we 
denote by  $E^s(\tau)$ and $ E^u(\tau)$ respectively the stable and unstable 
subspace. 
By invoking \cite[Proposition 2.1]{AM03}, we immediately get the following 
convergence result 
\begin{equation}\label{eq:convergence-stable-unstable}
 \lim_{\tau \to +\infty} E^s(\tau)= V^-(S^+) \textrm { and } 
\lim_{\tau \to 
-\infty} E^u(\tau)= V^+(S^-).
\end{equation}
\begin{defn}\label{def:index-hetero} 
Under the previous notation, we assume the following transversality condition is fulfilled 
\[
V^+(S^-) \pitchfork V^-(S^+) \textrm{ and } E^s(0)\pitchfork E^u(0).
\]
We define the {\em $\Z_2$-index of the  path $S$\/}, as follows
\[ 
\iota(S)\= \dueind{E^s(t)}{E^u(-t)}{ t\in [0,+\infty)}.
\]
\end{defn}
\begin{rem}
 We observe that by taking into account the convergence stated in Equation 
 \eqref{eq:convergence-stable-unstable} as well as Definition 
\ref{def:z2indexTT}, the index 
 given in Definition \ref{def:index-hetero} is well-defined. 
\end{rem}
Thus we are entitle to introduce the following definition. 
\begin{defn}\label{def:index-heteroclinic-vero}
Let $z_ -$ and  $z_+$ two hyperbolic restpoints. We term {\em  geometrical parity of 
the heteroclinic orbit $x$\/} connecting them,  the $\Z_2$-index 
of the linear  path  $S$ arising by linearizing  the nonautonomous vectorfield along $x$; thus 
in symbol
  \[
   \igeo(x)\= \iota(S)
  \]
  where $\iota(S)$ is given in Definition \ref{def:index-hetero}.
 \end{defn}
In the special case in which the system is Hamiltonian, 
as direct consequence of Proposition \ref{prop:2.1} as well as Definition 
\ref{def:z2indexTT}, 
Definition \ref{def:index-hetero} and finally Definition 
\ref{def:index-heteroclinic-vero}, we get 
the following result. 
\begin{cor}\label{def:z2indexT}  
Let $x$ be heteroclinic solution of the nonautonomous Hamiltonian  vectorfield between 
the hyperbolic 
restpoints $z_-$ and $z_+$. Thus, we have
\[
\iota(x)=\iCLM(x)  \mmod .
\]
\end{cor}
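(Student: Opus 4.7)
The plan is to reduce the statement, which concerns invariants on unbounded intervals, to Proposition \ref{prop:2.1}, which is about bounded intervals, after noting that the Hamiltonian hypothesis forces the relevant subspaces to be Lagrangian. Concretely, since $x$ is a heteroclinic orbit of a Hamiltonian vectorfield, the linearization $S(t)$ takes values in the symplectic Lie algebra $\mathfrak{sp}(2k,\R)$. Consequently the fundamental solution $\gamma_\tau(t)$ is a symplectic matrix, and both $E^s(\tau)$ and $E^u(\tau)$ are Lagrangian subspaces of $(\R^{2k},\omega)$ for every $\tau$. By the convergence in Equation \eqref{eq:convergence-stable-unstable}, the asymptotic limits $V^-(S^+)$ and $V^+(S^-)$ are Lagrangian as well, and by the hyperbolicity assumption $V^+(S^-)\pitchfork V^-(S^+)$, together with $E^s(0)\pitchfork E^u(0)$ from Definition \ref{def:index-hetero}.

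Next I would perform a truncation. By Definition \ref{def:index-hetero} combined with Definition \ref{def:z2indexTT}, there exists $T>0$ such that
\[
\iota(x)=\iota(S)=\dueind{E^s(t)}{E^u(-t)}{t\in[0,+\infty)}=\dueind{E^s(t)}{E^u(-t)}{t\in[0,T]}.
\]
The completely analogous truncation is available for the Maslov index $\iCLM(x)$: by the homotopy invariance of the CLM-index with respect to endpoints that stay transversal, a pair of Lagrangian paths whose ends on $[T,+\infty)$ are homotopic through transversal pairs to the constant pair $\bigl(V^-(S^+),V^+(S^-)\bigr)$ has the same CLM-index as its restriction to $[0,T]$. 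The gap-metric convergence in Equation \eqref{eq:convergence-stable-unstable}, together with the openness of transversality, provides such a homotopy for all sufficiently large $T$, hence
\[
\iCLM(x)\equiv \iCLMnew{E^s(t)}{E^u(-t)}{t\in[0,T]}\mmod.
\]

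At this point the pair $\bigl(E^s(\cdot),E^u(-\cdot)\bigr)$ belongs to $\LP^*([0,T],k)$, so Proposition \ref{prop:2.1} applies and yields
\[
\dueind{E^s(t)}{E^u(-t)}{t\in[0,T]}\equiv \iCLMnew{E^s(t)}{E^u(-t)}{t\in[0,T]}\mmod,
\]
and chaining this identity with the two truncation identities above gives $\iota(x)\equiv\iCLM(x)\mmod$. I expect the main obstacle to be the second step: one has to justify carefully that both the $\iota$-invariant and the $\iCLM$-invariant on the unbounded interval reduce to their restrictions to $[0,T]$ in a compatible way. While for $\iota$ this is built into Definition \ref{def:z2indexTT}, for $\iCLM$ it requires constructing an explicit transversal homotopy on $[T,+\infty)$ from $\bigl(E^s(t),E^u(-t)\bigr)$ to the constant pair of asymptotic Lagrangians, which is where the uniform gap-metric convergence of the stable/unstable subspaces to the spectral subspaces of the asymptotic operators is used.
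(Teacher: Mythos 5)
Your proposal is correct and follows essentially the same route as the paper, which states the corollary as a direct consequence of Proposition \ref{prop:2.1} together with Definitions \ref{def:z2indexTT}, \ref{def:index-hetero} and \ref{def:index-heteroclinic-vero}: truncate to a bounded interval where the ends are transversal (using the gap-metric convergence of $E^s(t)$, $E^u(-t)$ to the transversal asymptotic Lagrangians) and then invoke the mod~2 identification of the $\Z_2$-index with the CLM Maslov index. Your added care in checking that the Lagrangian structure and the vanishing of the Maslov contribution on $[T,+\infty)$ justify the truncation is exactly the detail the paper leaves implicit.
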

By the homotopy invariance of the $\Z_2$-index, we get the 
following result . (Cf. \cite{HP17}, for further details). 
\begin{prop}\label{thm:prop2.1} 
Let us consider the system  given in Equation \eqref{eq:1.2} and we assume 
conditions (A1)-(A2)-(A3). If 
\[
V^-(S_0^+)\pitchfork V^+(S_0^-)  \textrm { and } 
V^-(S_1^+)\pitchfork V^+(S_1^-) 
\]
then,  we have 
\[ 
\dueind{E_\lambda^u(0)}{E_\lambda^s(0)}{ \lambda\in[0,1]}\equiv 
\igeo(S_0)+\igeo(S_1)+\dueind{V^-(S_\lambda^+}){V^+(S_\lambda^-)}{ 
\lambda\in[0,1]}  \mmod .    
\]
\end{prop}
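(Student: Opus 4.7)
The plan is to run a two\nobreakdash-dimensional homotopy argument over the rectangle $R\=[0,1]\times[0,T]$ in the $(\lambda,s)$\nobreakdash-plane, for $T$ sufficiently large, exploiting the continuous family of pairs $(\lambda,s)\mapsto\bigl(E^s_\lambda(s),\,E^u_\lambda(-s)\bigr)$ taking values in $\Gr_{n-k}(n,\R)\times\Gr_k(n,\R)$. First I would check that this pair is transversal at each of the four corners of $R$, which is all that is needed to make sense of the $\Z_2$\nobreakdash-indices along each edge and to apply path additivity at the corners. Transversality at $(\lambda,s)=(0,0)$ and $(1,0)$ follows from assumption (A2), while transversality at $(\lambda,s)=(0,T)$ and $(1,T)$ holds for $T$ sufficiently large thanks to the convergence $E^s_\lambda(s)\to V^-(S^+_\lambda)$ and $E^u_\lambda(-s)\to V^+(S^-_\lambda)$ recalled in Equation \eqref{eq:convergence-stable-unstable}, combined with the hypothesised transversality of the limits at $\lambda=0,1$ and the openness of transversality in the gap topology.

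Next I would identify each side of the boundary of $R$ with one of the four terms in the claimed identity. Parametrising the boundary counterclockwise starting from $(0,0)$, the bottom edge gives $\dueind{E^s_\lambda(0)}{E^u_\lambda(0)}{\lambda\in[0,1]}$, which equals the left\nobreakdash-hand side $\dueind{E^u_\lambda(0)}{E^s_\lambda(0)}{\lambda\in[0,1]}$ by the symmetry Property IV. The right vertical edge is by definition $\dueind{E^s_1(s)}{E^u_1(-s)}{s\in[0,T]}=\igeo(S_1)$ for $T$ large, and the left edge analogously equals $\igeo(S_0)$ after invoking the reparametrisation Property I to reverse orientation. For the top edge one needs an intermediate step: the path $\lambda\mapsto\bigl(E^s_\lambda(T),E^u_\lambda(-T)\bigr)$ converges uniformly to $\lambda\mapsto\bigl(V^-(S^+_\lambda),V^+(S^-_\lambda)\bigr)$ as $T\to+\infty$, so for $T$ sufficiently large a straightforward interpolation between these two paths remains in transversal\nobreakdash-ended pairs by openness of transversality; homotopy invariance (Property II) then identifies the top edge index with $\dueind{V^-(S^+_\lambda)}{V^+(S^-_\lambda)}{\lambda\in[0,1]}$.

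By path additivity (Property III) applied at each of the three intermediate corners, all of which are transversal, the sum modulo $2$ of the four edge indices equals the $\Z_2$\nobreakdash-index of the concatenated boundary loop. I would close the argument by showing this loop index is zero. The loop is, by construction, a closed curve in $\Gr_{n-k}(n,\R)\times\Gr_k(n,\R)$ that bounds a continuous disk, namely $R$ together with its defining two\nobreakdash-parameter family, and is therefore null\nobreakdash-homotopic as a free loop. Following the idea of the proof of Lemma \ref{lem2.1}, the pull\nobreakdash-back of the tautological bundles through any null\nobreakdash-homotopic loop is trivial, so one may choose continuous frames for the two subspace\nobreakdash-valued paths along the loop whose values at the start and end agree; by Definition \ref{def:z2index}, the corresponding $\Z_2$\nobreakdash-index vanishes. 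Rearranging the resulting relation yields the desired identity.

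The main obstacle is justifying this closing step rigorously within the framework of the paper: one must argue that a loop in the product Grassmannian bounding a continuous disk carries trivial $\Z_2$\nobreakdash-index. This can be carried out by mimicking the frame manipulation used in the proof of Lemma \ref{lem2.1}, or equivalently by interpreting the $\Z_2$\nobreakdash-index as (a combination of) Stiefel--Whitney classes of the pulled\nobreakdash-back tautological bundles, which necessarily vanish on a bundle that extends over a contractible base. A more minor technical point is the uniformity in $\lambda$ of the convergence of the stable and unstable subspaces as $s\to\pm\infty$, which is ensured by the smoothness and boundedness hypotheses on the family $S$ together with Remark \ref{rem:importante}.
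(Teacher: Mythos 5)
Your proof is correct and is exactly the homotopy-invariance argument that the paper invokes for Proposition \ref{thm:prop2.1} (the paper gives no details, simply appealing to the homotopy invariance of the $\Z_2$-index and referring to \cite{HP17}). If you want to avoid justifying that a null-homotopic loop carries vanishing $\Z_2$-index, note that the same rectangle provides a fixed-endpoint homotopy, with transversal ends at the corners $(0,0)$ and $(1,T)$, between the concatenation bottom-then-right and the concatenation left-then-top, so Properties I--IV alone already yield the identity.
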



\section{Parity for path of Fredholm operator and the Index theorem }

In this section  we introduce  
the other last main ingredient of the  $\Z_2$-index prove Theorem \ref{thm:1.1} and Theorem \ref{thm:1.2}.

Let $X,Y$ be two real and separable Hilbert spaces and  $\mathcal{F}_0(X,Y)$ 
be denote 
the set of all Fredholm  operators of index $0$. Given a continuous path  
$T: [0,1]\to \mathcal{F}_0(X,Y)$ having  invertible endpoints,  
there is  a homotopy invariant of $T$ termed {\em parity of $T$\/}  and denoted 
by 
$\parity{T(\lambda)}{\lambda \in [0,1]}$ which is an 
element of the (reduced real Groethendieck group), $\widetilde{KO}$; in symbols
\[
\parity{T(\lambda)}{\lambda \in [0,1]}\in \widetilde{KO}([0,1], \partial([0,1]))\cong \Z_2.
\]
In a geometrical fashion, the parity can be generically seen as a mod 2 
intersection index between a continuous path in $\mathcal F_0(X,Y)$ and the one-codimensional 
submanifold of all degenerate operators. 
(For further details, we refer the interested reader to 
\cite[Section 3]{FP91b}).   As  was proved by 
authors in  \cite[Section 2]{FP91a}, if $T:P \to \mathcal F_0(X,Y)$ is a continuous family of 
linear Fredholm operators of index $0$ parametrised by $P$, the parity is equivalent  to the 
nonorientability of the  index bundle (actually an equivalence class of vector bundles) 
of the path $T$, namely $\mathrm{Ind}(T) \in \widetilde{KO}(P)$ and 
in particular it is measured by  $w_1(\mathrm{Ind}(T)) \in H^1(P,  \Z_2)$ 
the {\em first Stiefel-Whitney  class\/} of $\mathrm{Ind}(T)$.

For the sake of the reader, we explain what the parity of $T$ means,  in the special 
case of finite-dimensional vector space  and for continuous families parametrised by $\SSS^1$.  
We recall that  vector bundles over the spheres, could be constructed by means of the trivial bundle 
on disks (homeomorphic to the upper lower hemisphere), through the clutching functions. More 
precisely,  let  $E_2, E_1$ be two real vector bundles over $[0,1]$ such that  $\dim E_2=\dim 
E_1=k$ and  let $T: E_2\to  E_1$ be a bundle morphism such that  $T|_{\partial([0,1])}$ is 
invertible. We set 
$I_j\=[0,1] \times\{j\}$ for $j=0,1$ and we consider the disjoint union  
\[
I_0\coprod I_1
\]
obtained by  identifying the four points  (two by two) in $\partial([0,1]) \times \{0\}$ and 
$\partial([0,1]) \times \{1\}$ as follows 
\[
(0,0)\cong (0,1) \textrm{ and }(1,0)\cong (1,1).
\]
Under this identification,  $\partial([0,1]) \times \{0\} \cup \partial([0,1]) \times 
\{1\}$ is 
homeomorphic to $\SSS^1$. 
Since by assumption $T|_{\partial([0,1])}$ is a bundle isomorphism, we get (for any such a bundle map $T$),  
a  well-defined bundle $E_T$ over $\SSS^1$. We have 
\[
\parity{T(\lambda)}{\lambda \in [0,1]}\=w_1(E_T)=\begin{cases}
                                       0 \quad \textrm{ if $E_T$ is orientable }\\
                                       1 \quad \textrm{ if $E_T$ is unorintable }.
                                      \end{cases} 
\]
We recall that the two generators of $ \widetilde{KO}([0,1], \partial([0,1]))\cong  
\widetilde{KO}(\SSS^1)\cong \Z_2$ are the trivial and the M\"obius bundle. 
Since every vector bundle on the interval is trivial, up to identifying the 
fibres with the Euclidean 
space $\R^k$, the  bundle map $T$ induces a continuous 
path of linear maps on $\R^k$ parametrised by $[0,1]$ and pointwise 
defined by $T(s): \R^k\to \R^k$,  such that  $T(0), T(1)$ are invertible. 
\begin{prop}\label{thm:cor2.3}
For any $s \in [0,1]$, let   $\Gr\big(T(s)\big)\subset \R^k \times \R^k$ be denote  
the graph of $T_s$  and let $\Lambda_0=\R^k\times \{0\}$.  Then, we  have
\begin{equation}
\parity{T(s)}{s \in [0,1]}=\dueind{\Gr\big(T(s)\big)}{\Lambda_0}{s \in [0,1]}. 
\end{equation}
\end{prop}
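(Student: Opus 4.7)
The plan is to show that both sides of the identity compute the same invariant of the pair $(\sgn\det T(0), \sgn\det T(1))$; concretely, both vanish precisely when $\det T(0)\cdot \det T(1) > 0$ and equal $1$ otherwise. Since $\Mat(k,\R)$ is contractible, every continuous path in $\Mat(k,\R)$ with prescribed invertible endpoints is homotopic (rel endpoints, through paths with invertible endpoints) to any other such path. Both the parity and the $\Z_2$-index are homotopy invariants rel endpoints (the former by the homotopy invariance property of the parity, the latter by Property II of Section~2), so each side depends only on the endpoint data.

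For the left-hand side, I would use the clutching description recalled just before the statement. Trivialising $E_1$ and $E_2$ over $[0,1]$, the bundle $E_T\to \SSS^1$ is obtained by gluing two copies of the trivial bundle over arcs, with clutching given by $T(0)$ at one junction point and $T(1)$ at the other. Real rank-$k$ vector bundles over $\SSS^1$ are classified by $\pi_0(\GL(k,\R))\cong\Z_2$, detected by the sign of the determinant of a representative of the clutching function. Hence $E_T$ is orientable (equivalently trivial) if and only if $T(0)$ and $T(1)$ lie in the same component of $\GL(k,\R)$, i.e.\ $\det T(0)\cdot \det T(1)>0$. By the definition $\parity{T(s)}{s\in[0,1]}=w_1(E_T)$, this yields
\[
\parity{T(s)}{s\in[0,1]}=0\iff \det T(0)\cdot\det T(1)>0.
\]

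For the right-hand side, I would compute the $\Z_2$-index of $(\Gr(T(s)),\Lambda_0)$ directly from Definition~\ref{def:z2index}. A continuous $k$-frame generating $\Gr(T(s))$ is given by the columns of $\left[\begin{smallmatrix} I \\ T(s)\end{smallmatrix}\right]$, and a (constant) $k$-frame generating $\Lambda_0$ is given by the columns of $\left[\begin{smallmatrix} I\\ 0\end{smallmatrix}\right]$. Transversality $\Gr(T(s))\pitchfork \Lambda_0$ at the endpoints is equivalent to invertibility of $T(s)$ there, which holds by assumption. The associated $2k\times 2k$ matrix is
\[
M(s)=\begin{bmatrix} I & I \\ T(s) & 0\end{bmatrix},
\]
and a block-triangularisation (for instance, subtracting the first column-block from the second) gives $\det M(s)=(-1)^k\det T(s)$. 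Hence $\det M(0)\cdot \det M(1)=\det T(0)\cdot \det T(1)$, and the sign of this product decides the $\Z_2$-index exactly as it decided the parity. Matching the two cases concludes the proof.

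The only real subtlety is the classification step for $E_T$: one has to invoke the fact that the set of isomorphism classes of real rank-$k$ bundles on $\SSS^1$ is $\pi_0(\GL(k,\R))=\Z_2$, detected by the sign of $\det$ of the clutching. This is standard but is the one place where the geometric content of the parity enters; the rest of the argument is a purely algebraic determinant computation.
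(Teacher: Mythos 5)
Your proposal is correct and takes essentially the same route as the paper: the same frame matrix $M(s)=\left[\begin{smallmatrix} I & I\\ T(s) & 0\end{smallmatrix}\right]$ with $\det M(s)=(-1)^k\det T(s)$ for the $\Z_2$-index, and orientability of the clutched bundle $E_T$ decided by $\sgn\big(\det T(0)\cdot\det T(1)\big)$ for the parity, which you merely justify in a bit more detail (via $\pi_0(\GL(k,\R))\cong\Z_2$) where the paper says it is obvious. Note also that your case distinction (index $=0$ exactly when $\det T(0)\cdot\det T(1)>0$) is the one consistent with Definition \ref{def:z2index}; the two cases displayed in the paper's proof are inadvertently swapped.
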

\begin{proof}
Let $\mathscr E\=\{e_1,\cdots,e_{2k}\}$  be the canonical basis of $\R^k \times 
\R^k$; thus in particular  $\Lambda_0$ is generated by $\{e_1, \dots, 
e_k\}$. We define the 
following block matrix 
\begin{equation}
M(s)\=
\begin{bmatrix}
\Id & \Id\\
 T(s) &0 
\end{bmatrix}, \qquad s \in [0,1]
\end{equation}
and we observe also that $\det M(s)= (-1)^k \det T(s)$. In particular, we have
\begin{equation}
\dueind{\Gr\big( T(s)\big)}{\Lambda_0}{s \in [0,1]}=
\begin{cases}
1 &  \textrm{ if } \det T(0)\cdot \det T(1) >0\\
0 & \textrm{ if } \det T(0)\cdot \det T(1) <0.
\end{cases}
\end{equation}
Furthermore, it is obvious that  the constructed bundle $E_T$ is orientable 
if and only if  $\det T_0\cdot\det T_1>0$. This complete the proof. 
\end{proof}
We are now in position to discuss the infinite dimensional case. Since $[0,1]$ is 
compact, there exists a subspace $V\subset Y$, such that 
\begin{equation}\label{eq:eq2.1FP} 
\im T(s) +V=Y,\quad  s \in [0,1].  
\end{equation}
We let $E_s\=T^{-1}(s)(V)$ and we observe that   $\dim E_s=\dim V$. From 
Equation \eqref{eq:eq2.1FP} 
it easily follows that the set $E\=\{(s, x)| T(s)x \in V\}$ is the total 
space of a (trivial) vector bundle over $[0,1]$. 
\begin{defn}\label{def:parity}
Let $X,Y$ be two real and separable Hilbert spaces and let  
\[
T: \big([0,1], \partial([0,1])\big) \to \big(\mathcal{F}_0(X,Y), \GL(X,Y)\big)
\]
be a continuous path of pairs.
We define the {\em parity\/}  of $T$, $
\parity{T(s)}{s \in [0,1]}\in \widetilde{KO}([0,1], \partial([0,1]))\cong \Z_2$ 
as the first Stiefel-Whitney class of the bundle $E_T \to \SSS^1$
\[
\parity{T(s)}{s \in [0,1]}=w_1\big(E_T\big).
\]
\end{defn}
\begin{rem}
It is well-known that this definition is well-posed in the sense that it is 
independent on the 
choice of $V$. (Cf. \cite{FP91a, FP91b} and references therein). 
\end{rem}
Let us consider the (smoooth) path of first order differential operators, pointwise 
defined by 
\[
A_\lambda\=\dfrac{d}{dt}-S_\lambda(t), \qquad t \in \R
\]
arising by the system given in Equation \eqref{eq:1.2}.
and we observe (cf. \cite[Proposition 3.1]{Pej08}) 
that for each $\lambda \in [0,1]$, the operator $A_\lambda$ is a bounded Fredholm 
operator of index 0 from 
$\dom$ into $\hil$.  We recall that under the assumption (A2), both the operators 
$A_0$ and $ A_1$ are  invertible. For $\tau>0$, we  let 
\[ 
E_\lambda(\tau)\=\Set{x\in W^{1,2}([-\tau,\tau], \mathbb{R}^n)|x(-\tau)\in 
E^u(-\tau) ,  x(\tau) \in E^s(\tau)}
\] 
and we define   $A_{\lambda,\tau}$ to be  the restriction of   
$A_\lambda$  to $E_\lambda(\tau)$, namely 
\[
 A_{\lambda,\tau}\= A_\lambda\big\vert_{E_\lambda(\tau)}.
\]
We recall that the adjoint $A_\lambda^*$ of $A_\lambda$  
is the (closed) unbounded operator on $\hil$ densely defined 
on $\dom$ given  by 
\begin{equation}\label{eq:adjoint-do}
 A_\lambda^*\=-\dfrac{d}{dt}-\trasp{S}_\lambda(t).
\end{equation}
As before, for $\tau>0$, we define the subspace 
\[ 
F_\lambda(\tau)\=\Set{x\in W^{1,2}([-\tau,\tau], \mathbb{R}^n)|x(-\tau)\in 
\big[E^u(-\tau)\big]^\perp ,  x(\tau) \in \big[E^s(\tau)\big]^\perp }
\] 
and we denote   by $A^*_{\lambda,\tau}$ to be  the restriction of   
$A^*_\lambda$  to $F_\lambda(\tau)$, namely 
\[
 A^*_{\lambda,\tau}\= A^*_\lambda\big\vert_{F_\lambda(\tau)}.
\]
Given  $x\in  E_\lambda(\tau)$ (resp. $x\in  F_\lambda(\tau)$)  we extend $x$ on the whole of $\R$ as 
follows; we let 
 \begin{equation}\label{eq:estensione}
e_{\lambda,\tau}(x)(t)\=\begin{cases}
                                       \gamma_{(\lambda,\tau)}(t)x(\tau) &
\textrm{ if } t\geq \tau \\
                                       x(t) & \textrm{ if }
t\in[-\tau,\tau]   \\
                                         \gamma_{(\lambda,-\tau)}(t)x(-\tau) 
& \textrm{ if } t\leq -\tau .
                                      \end{cases}
\end{equation}
where $\gamma_{(\lambda, *)}$ is the matrix-valued solution of the system  
defined in Equation \eqref{eq:1.2} (resp. in Equation \eqref{eq:adjoint-do}).   It 
such that  $\gamma_{(\lambda, *)}(*)=\Id$. It is immediate to check that 
$x\in \ker A_{\lambda,\tau}$ (resp. $x\in \ker A^*_{\lambda,\tau}$ )
if and only if  $e_{\lambda,\tau}(x)\in \ker 
A_\lambda$  (resp. $x\in \ker A^*_{\lambda}$) and by this claim the next result readily follows. 
  \begin{lem}\label{thm:equivgsp}
The following equality  holds
\begin{equation}
\begin{split}
\dim\ker A_\lambda=\dim\ker A_{\lambda,\tau}=
\dim\ker A^*_\lambda &=\dim\ker A^*_{\lambda,\tau}.
\end{split}
\end{equation}
\end{lem}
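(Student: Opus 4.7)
The plan is to split the four-way equality of Lemma~\ref{thm:equivgsp} into three independent links: $\dim\ker A_{\lambda,\tau}=\dim\ker A_\lambda$, $\dim\ker A^*_{\lambda,\tau}=\dim\ker A^*_\lambda$, and $\dim\ker A_\lambda=\dim\ker A^*_\lambda$. The first two rest on the extension map $e_{\lambda,\tau}$ of Equation~\eqref{eq:estensione} paired with restriction to $[-\tau,\tau]$; the last is an immediate consequence of the fact, recalled from \cite[Proposition 3.1]{Pej08}, that $A_\lambda:\dom\to\hil$ is Fredholm of index zero.

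For the first link, I would show that $e_{\lambda,\tau}$ maps $\ker A_{\lambda,\tau}$ into $\ker A_\lambda$ and that restriction to $[-\tau,\tau]$ gives its two-sided inverse. Given $x\in\ker A_{\lambda,\tau}$, the function $e_{\lambda,\tau}(x)$ solves the linear ODE on each of the three pieces by construction: on $[\tau,+\infty)$ and $(-\infty,-\tau]$ because $\gamma_{(\lambda,\pm\tau)}$ is the fundamental matrix solution, and on $[-\tau,\tau]$ because $x$ solves it there. The pieces match continuously at $\pm\tau$ because $\gamma_{(\lambda,\pm\tau)}(\pm\tau)=\Id$. Membership in $\dom=W^{1,2}(\R,\R^n)$ is where the boundary constraints encoded in $E_\lambda(\tau)$ are used decisively: $x(\tau)\in E^s(\tau)$ forces $\gamma_{(\lambda,\tau)}(t)x(\tau)\to 0$ as $t\to+\infty$, and the hyperbolicity built into assumption (A1) strengthens this convergence to an exponential decay, which yields the required $L^2$-integrability. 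The argument on $(-\infty,-\tau]$ is symmetric, using $x(-\tau)\in E^u(-\tau)$. Conversely, any $y\in\ker A_\lambda\subset\dom$ decays to $0$ at $\pm\infty$, so by definition $y(\tau)\in E^s(\tau)$ and $y(-\tau)\in E^u(-\tau)$, and hence $y|_{[-\tau,\tau]}\in\ker A_{\lambda,\tau}$. The two constructions are manifestly inverse to one another.

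For the second link, the same scheme applies to the adjoint operator $A_\lambda^*$ of Equation~\eqref{eq:adjoint-do}, provided one first identifies the stable and unstable subspaces of the adjoint linear system. The key observation is the identity $\langle\gamma_{(\lambda,\tau)}(t)^{-\mathsf T}w,\gamma_{(\lambda,\tau)}(t)v\rangle=\langle w,v\rangle$, valid for every $t\in\R$: if $v\in E^s(\tau)$ the right factor decays, so any bounded left factor forces $\langle w,v\rangle=0$; a dimension count based on (A3) together with the standard identity $V^-(-\trasp{T})=[V^-(T)]^\perp$ for hyperbolic $T$ upgrades the orthogonality to the equality of the stable subspace of the adjoint system at $\tau$ with $[E^s(\tau)]^\perp$, and analogously for the unstable one at $-\tau$ with $[E^u(-\tau)]^\perp$. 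Once this is in place, the extension in Equation~\eqref{eq:estensione} built out of the adjoint fundamental solution $\psi_{(\lambda,*)}=\gamma_{(\lambda,*)}^{-\mathsf T}$ maps $\ker A^*_{\lambda,\tau}$ isomorphically onto $\ker A^*_\lambda$ by a verbatim repetition of the previous paragraph, since the two subspaces prescribed in the definition of $F_\lambda(\tau)$ are precisely what is needed to ensure decay of the extended function at $\pm\infty$.

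The third link, $\dim\ker A_\lambda=\dim\ker A^*_\lambda$, is purely functional-analytic: the vanishing Fredholm index of $A_\lambda$ gives $\dim\ker A_\lambda=\dim\mathrm{coker}\,A_\lambda$, and the Banach-space duality $\mathrm{coker}\,A_\lambda\cong\ker A^*_\lambda$ closes the chain. The main obstacle in the whole plan is the one genuine analytic point in the first two paragraphs, namely the upgrade from pointwise decay of the stable/unstable tails to $L^2$-decay of the extension $e_{\lambda,\tau}(x)$. This relies on the exponential dichotomy already guaranteed by (A1); everything else is bookkeeping around this estimate and the definition of the invariant subspaces.
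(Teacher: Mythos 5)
Your proposal is correct and follows essentially the same route as the paper: the paper's argument is precisely the extension map $e_{\lambda,\tau}$ (built from the fundamental solution of the system, resp.\ of its adjoint) giving a bijection between $\ker A_{\lambda,\tau}$ and $\ker A_\lambda$ (resp.\ $\ker A^*_{\lambda,\tau}$ and $\ker A^*_\lambda$), with the equality $\dim\ker A_\lambda=\dim\ker A^*_\lambda$ coming from the index-zero Fredholm property. You merely spell out details the paper leaves implicit, notably the exponential decay needed for $W^{1,2}$-membership and the identification of $[E^s(\tau)]^\perp$, $[E^u(-\tau)]^\perp$ as the stable/unstable subspaces of the adjoint system, which is exactly what justifies the definition of $F_\lambda(\tau)$.
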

The path $\lambda \mapsto A_{\lambda, \tau}$ plays a central role in the next Proposition which 
represent the main ingredient for proving Theorem \ref{thm:1.1}.
\begin{prop}\label{thm:3.3}  For $\tau>0$ sufficiently large, the following 
equality holds: 
\[
\parity{A_{\lambda,\tau}}{\lambda \in [0,1]}=\parity{A_{\lambda}}{\lambda \in [0,1]}. 
\]
\end{prop}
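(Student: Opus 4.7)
My plan is to realize both parities through the same finite-dimensional stabilization, using Lemma~\ref{thm:equivgsp} as the structural bridge. The extension map $e_{\lambda,\tau}$ from \eqref{eq:estensione} already identifies $\ker A_{\lambda,\tau}$ with $\ker A_\lambda$, and the adjoint analogue identifies the cokernels; I would promote this fibrewise identification to a continuous bundle isomorphism between the auxiliary bundles defining the parity in Definition~\ref{def:parity}.

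First I would invoke compactness of $[0,1]$ together with density to choose a finite-dimensional subspace $V\subset\hil$ consisting of smooth, compactly supported functions (all supported in a common compact set) and satisfying $\im A_\lambda+V=\hil$ for every $\lambda\in[0,1]$. For $\tau$ large enough that every element of $V$ is supported in $(-\tau,\tau)$, I would view $V$ inside $L^2([-\tau,\tau],\R^n)$ via extension by zero. Transversality of $V$ to $\im A_{\lambda,\tau}$ for every $\lambda$ then follows by dualising: restriction identifies $\ker A^*_\lambda$ with $\ker A^*_{\lambda,\tau}$ (the adjoint version of Lemma~\ref{thm:equivgsp}), and for $v\in V$ one has $\langle v,w^*\rangle_\hil=\langle v|_{[-\tau,\tau]},w^*|_{[-\tau,\tau]}\rangle$, so non-degeneracy of the pairing on the line transfers to non-degeneracy on the truncated problem.

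Next, I would form the (trivial) bundles $E\=\{(\lambda,u)\mid A_\lambda u\in V\}$ and $E^\tau\=\{(\lambda,x)\mid A_{\lambda,\tau}x\in V\}$ over $[0,1]$, both of fibrewise rank $\dim V$ (since $A_\lambda$ and $A_{\lambda,\tau}$ are Fredholm of index $0$). Define $\Phi_\lambda\colon E^\tau_\lambda\to E_\lambda$ by $\Phi_\lambda(x)\=e_{\lambda,\tau}(x)$. This is well-defined because the extension solves the homogeneous equation outside $[-\tau,\tau]$, so $A_\lambda(\Phi_\lambda(x))$ is supported in $[-\tau,\tau]$ and there equals $A_{\lambda,\tau}(x)\in V$; it is a fibrewise isomorphism by Lemma~\ref{thm:equivgsp} together with the equality of ranks; and it is continuous in $\lambda$ by continuity of the fundamental solutions $\gamma_{(\lambda,\pm\tau)}$ and of the boundary subspaces $E^u_\lambda(-\tau),E^s_\lambda(\tau)$. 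At $\lambda=0,1$ both operators are invertible, so the clutching maps on the two sides are intertwined by $\Phi$; the clutched bundles over $\SSS^1$ are therefore isomorphic, and their first Stiefel--Whitney classes agree.

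The main obstacle I expect is interpretive rather than computational: the domain $E_\lambda(\tau)$ of $A_{\lambda,\tau}$ varies with $\lambda$ through the subspaces $E^u_\lambda(-\tau),E^s_\lambda(\tau)$, so in order even to speak of a continuous path $\lambda\mapsto A_{\lambda,\tau}$ of Fredholm operators between fixed spaces I would need to pick continuous frames for these Grassmannian-valued paths and pull everything back to a fixed model domain. Once this trivialization is installed --- and parity is invariant under such a reparametrisation --- the well-definedness of the path, the continuity of $\Phi$, and the endpoint matching all go through as above; the remaining work is careful bookkeeping on the fundamental solutions and spectral projections.
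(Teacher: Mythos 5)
Your proposal is correct, and its engine is the same as the paper's: both parities are computed from finite-dimensional transversal subspaces, the resulting rank-$\dim V$ bundles over $[0,1]$ are identified fibrewise by the extension map $e_{\lambda,\tau}$ (with inverse the restriction $\chi_\tau$), the identification intertwines $A_{\lambda,\tau}$ and $A_\lambda$ because the extension solves the homogeneous equation outside $[-\tau,\tau]$, and equality of the first Stiefel--Whitney classes of the clutched bundles over $\SSS^1$ gives the equality of parities. Where you genuinely diverge is in the choice of the transversal subspace. The paper builds a $\lambda$-dependent family $V_\lambda\supset\ker A_\lambda^*$ out of spectral projections of $A_\lambda A_\lambda^*$ (Lemma \ref{thm:primo}), truncates it to $V_{\lambda,\tau}=\chi_\tau V_\lambda$ (Lemma \ref{thm:secondo}), needs the quantitative estimate $\norm{\chi_\tau y}_2\geq\tfrac12\norm{y}_2$ for $\tau$ large (Lemma \ref{thm:terzo}) to keep $\chi_\tau$ injective, and then returns to the line via $\overline V_\lambda=E_2(V_{\lambda,\tau})$ (Lemmas \ref{thm:quarto}--\ref{thm:quinto}). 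You instead fix a single $\lambda$-independent $V\subset\hil$ of smooth functions with common compact support (compactness of $[0,1]$ plus openness of transversality and density of compactly supported functions), so that ``$\tau$ large'' only means $\tau$ exceeds the support, and you obtain $V+\im A_{\lambda,\tau}=\hiltau$ by duality through the kernel identification of Lemma \ref{thm:equivgsp}; this is cleaner, matches Definition \ref{def:parity} (which uses one fixed $V$) more directly, and dispenses with the uniform lower bound of Lemma \ref{thm:terzo}. You are also more explicit than the paper about two points it leaves implicit and which you should indeed record: that $A_{\lambda,\tau}$ is Fredholm of index $0$ with closed range equal to $\big(\ker A^*_{\lambda,\tau}\big)^\perp$ and with invertible endpoints $A_{0,\tau},A_{1,\tau}$ (both follow from Lemma \ref{thm:equivgsp} and the index count), so the parity of the truncated path is defined at all; and that the $\lambda$-dependence of the domains $E_\lambda(\tau)$ must be removed by trivialising with continuous frames of $E^u_\lambda(-\tau)$ and $E^s_\lambda(\tau)$, which exist by the continuity of these Grassmannian-valued paths quoted after assumption (A3). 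The paper's route buys a canonical (spectral) choice of $V_\lambda$ with $\tau_0$ governed by the decay of kernel and cokernel elements; yours buys a shorter argument with fewer auxiliary lemmas at the cost of a preliminary perturbation/patching step to produce the common compactly supported $V$.
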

\begin{rem}
The main idea behind the proof of Proposition \ref{thm:3.3} is that,
if $\tau>0$ is sufficiently large, the 
arising vector bundles constructed through $A_\lambda$ and $A_{\lambda, \tau}$ are (bundle) 
isomorphic; thus the first Stiefel-Whitney classes  coincide. 
\end{rem}
\begin{note}
In what follows, we set
\begin{equation}
\begin{split}
\domtau \=W^{1,2}([-\tau, \tau], \R^n)&\\
& \hiltau= L^2([-\tau, \tau], \R^n)
\end{split}
\end{equation}
\end{note}
We define the   following  {\em restriction\/} and {\em prolongation operator\/} respectively 
denoted by $\chi$ and $E_2$ and defined as  follows 
 \[
 \chi_\tau: \hil \longrightarrow\hiltau: x(t)\longmapsto 
x(t)|_{[-\tau,\tau]}
\]  
and $ E_2: \hiltau\to \hil$ defined by setting 
  \[
E_2(x)(t)\=
\begin{cases}
0 & \textrm{ if }|t|\geq \tau\\
x(t) & \textrm{ if } t\in[-\tau,\tau].
\end{cases}
\]
\begin{lem}\label{thm:primo}
For each $\lambda \in [0,1]$, there exists a finite dimensional subspace 
$V_\lambda$ of  $\dom$, such that 
\begin{equation}
 V_\lambda + \im A_\lambda = \hil.
\end{equation}
\end{lem}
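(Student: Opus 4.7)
My plan is to exploit the Fredholm property of $A_\lambda$ directly. Since $A_\lambda : \dom \to \hil$ is a Fredholm operator of index $0$ (by the cited \cite[Proposition 3.1]{Pej08}), its image $\im A_\lambda$ is a closed subspace of $\hil$ of finite codimension equal to $N \= \dim \ker A_\lambda$. In particular, the quotient $\hil/\im A_\lambda$ is a finite dimensional Banach space and the canonical projection $\pi : \hil \to \hil/\im A_\lambda$ is bounded.

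First I would observe that the inclusion $\dom \hookrightarrow \hil$ is continuous with dense image, since $W^{1,2}(\R,\R^n)$ is dense in $L^2(\R,\R^n)$. Composing with $\pi$ produces a bounded linear map from $\dom$ into the finite dimensional space $\hil/\im A_\lambda$ whose range is dense, hence equal to the whole cokernel. Consequently I may select $v_1, \dots, v_N \in \dom$ such that $\{\pi(v_1), \dots, \pi(v_N)\}$ is a basis of $\hil/\im A_\lambda$.

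Setting $V_\lambda \= \gen\{v_1, \dots, v_N\}$, the restriction $\pi|_{V_\lambda}$ is surjective onto $\hil/\im A_\lambda$, which precisely means $V_\lambda + \im A_\lambda = \hil$. Since $\dim V_\lambda \leq N < \infty$, this is the required finite dimensional subspace of $\dom$.

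No genuine obstacle is anticipated here. The only point worth a moment of care is that $V_\lambda$ is selected inside $\dom$ rather than merely inside $\hil$, but this is exactly what the density of $W^{1,2}$ in $L^2$ delivers once combined with the finite dimensionality of the cokernel; should one prefer, a perturbation argument using the openness of transversality to a closed finite codimensional subspace in the Grassmannian of $N$-planes yields the same conclusion. I expect nothing beyond the Fredholm property of $A_\lambda$ and this density statement to be needed.
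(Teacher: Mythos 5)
Your argument is correct for the statement as literally written, and it is more elementary than the paper's: you only use that $A_\lambda$ is Fredholm of index $0$ (so $\im A_\lambda$ is closed with finite--dimensional quotient $\hil/\im A_\lambda$) together with the density of $\dom=W^{1,2}$ in $\hil=L^2$, to lift a basis of the cokernel into $\dom$. The paper instead takes $V_\lambda \= \im P_\lambda$, where $P_\lambda$ is the Riesz spectral projection $\frac{1}{2\pi i}\int_\Gamma\big[z-\widetilde A_\lambda\big]^{-1}dz$ of $\widetilde A_\lambda=A_\lambda A_\lambda^*$ associated with a small circle $\Gamma$ around $0$. The difference matters downstream: the spectral construction delivers two extra properties that your pointwise, ad hoc choice does not guarantee, namely $\ker A_\lambda^*\subset V_\lambda$ and (local) continuity of $\lambda\mapsto V_\lambda$ (equivalently of $\lambda\mapsto P_\lambda$). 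These are exactly what is used afterwards: Lemma \ref{thm:secondo} needs $\ker A^*_{\lambda,\tau}\subset V_{\lambda,\tau}=\chi_\tau V_\lambda$, which rests on $\ker A_\lambda^*\subset V_\lambda$; Lemma \ref{thm:terzo} builds continuous frames $e_i(\lambda)=P_\lambda e_i$ and a sphere bundle over the parameter interval, which requires continuity of the family; and the bundle comparison in the proof of Proposition \ref{thm:3.3} implicitly uses that the transversal subspaces vary continuously so that $E_\lambda=A_{\lambda,\tau}^{-1}V_{\lambda,\tau}$ and $\overline E_\lambda$ are genuine vector bundles. So your proof settles the lemma as stated (and even places $V_\lambda$ in $\dom$, matching the wording), but if one substituted your $V_\lambda$ for the paper's, the subsequent lemmas would have to be reworked; the paper's heavier spectral-projection route is what buys the continuity and the inclusion of $\ker A_\lambda^*$ that the index-bundle argument actually consumes.
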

\begin{proof}
We let  $\lambda_0 \in (0,1)$.
For $\epsilon >0$ sufficiently small,  we consider the interval $[\lambda_0-\epsilon, 
\lambda_0+\epsilon]\subset [0,1]$. 
If $\epsilon>0$ is small enough, then there exists a finite dimensional 
subspace $V_\lambda$ of 
$ \hil$
such  that $\ker A^*_{\lambda}\subset V_\lambda$.
To see this, we let $\widetilde A_\lambda\=A_\lambda A_\lambda^*$ and we observe 
that $\lambda \mapsto \widetilde 
A_\lambda$ is a continuous path of selfadjoint Fredholm operators  such that 
\[
\ker \widetilde A_\lambda=  \ker A_\lambda^*.
\]
Let $\Gamma$ be a small circle around the origin chosen in such a 
way  $\Gamma\cap\sigma(\widetilde A^*_\lambda)=\emptyset $ for each  $\lambda\in[\lambda_0-\epsilon, 
\lambda_0+\epsilon]$ 
and let us consider  the projector operator 
\[
P_\lambda=\frac{1}{2\pi i}\int_\Gamma\big[z-(\widetilde A^\C)^*_{\lambda}\big]^{-1}dz , 
\]  
and  $V_\lambda=\im P_\lambda\supset \ker \widetilde A_\lambda=  \ker A_\lambda^*$. 
Since $\lambda \mapsto P_\lambda$ is 
continuous on  $[\lambda_0-\epsilon, 
\lambda_0+\epsilon]$, it follows that  the path $\lambda \mapsto V_\lambda$ is. 
Moreover by the Fredholmness of $A_\lambda^*$,  we get that $\dim V_\lambda <+\infty$.
\end{proof}
\begin{lem}\label{thm:secondo}
We let $V_{\lambda, \tau}\=\chi_\tau V_\lambda$ where $V_\lambda \in \hil$ 
has been defined in   Lemma \ref{thm:primo}.  Thus, we have
\begin{equation}
  V_{\lambda, \tau} + \im A_{\lambda,\tau} = \hiltau.
\end{equation}
\end{lem}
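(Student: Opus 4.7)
The plan is to prove the equality by showing that the orthogonal complement of $\chi_\tau V_\lambda + \im A_{\lambda,\tau}$ in $\hiltau$ is trivial. Since $\chi_\tau V_\lambda$ is finite dimensional and $\im A_{\lambda,\tau}$ is closed (as the range of a Fredholm operator of index zero, by Lemma~\ref{thm:equivgsp}), their sum is automatically closed, and triviality of the orthogonal complement then yields the asserted equality.

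So I suppose $f \in \hiltau$ is orthogonal to both $\chi_\tau V_\lambda$ and $\im A_{\lambda,\tau}$. Integrating by parts in $\langle f, A_{\lambda,\tau} x\rangle_{\hiltau}$ for $x \in E_\lambda(\tau)$ produces the boundary term $\langle f(\tau), x(\tau)\rangle - \langle f(-\tau), x(-\tau)\rangle$ together with the bulk contribution $\langle A^*_\lambda f, x\rangle$. Letting $x$ range first over compactly supported test functions in $(-\tau,\tau)$ forces $A^*_\lambda f = 0$ weakly (so in particular $f \in W^{1,2}([-\tau,\tau],\R^n)$); letting $x(\pm\tau)$ range freely over $E^u_\lambda(-\tau)$ and $E^s_\lambda(\tau)$ respectively then yields $f(-\tau) \in E^u_\lambda(-\tau)^\perp$ and $f(\tau) \in E^s_\lambda(\tau)^\perp$. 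In the notation of the excerpt this reads $f \in F_\lambda(\tau)$ with $A^*_{\lambda,\tau} f = 0$.

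Next I realize $f$ as the restriction of an element of $V_\lambda$. The adjoint counterpart of the extension formula~\eqref{eq:estensione}, which is the map underlying Lemma~\ref{thm:equivgsp}, produces a unique $\tilde f \in \ker A^*_\lambda$ whose restriction to $[-\tau,\tau]$ equals $f$: the boundary conditions $f(\pm\tau) \in E^{u/s}_\lambda(\pm\tau)^\perp$ are exactly the decay conditions at $\mp\infty$ for the adjoint flow. On the other hand, the choice of $V_\lambda = \im P_\lambda$ in Lemma~\ref{thm:primo}, with $P_\lambda$ the spectral projector of $\widetilde A_\lambda = A_\lambda A^*_\lambda$ around $0$, gives the inclusion $\ker A^*_\lambda = \ker \widetilde A_\lambda \subset V_\lambda$, so $\tilde f \in V_\lambda$ and $\chi_\tau \tilde f = f \in \chi_\tau V_\lambda$.

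Applying the hypothesis $f \perp \chi_\tau V_\lambda$ to the test element $\chi_\tau \tilde f$ finally yields
\[
\|f\|^2_{\hiltau} = \langle f, \chi_\tau \tilde f\rangle_{\hiltau} = 0,
\]
so $f \equiv 0$, as required. The main technical point, requiring some care but no novelty, will be the integration-by-parts step that identifies $(\im A_{\lambda,\tau})^\perp$ with $\ker A^*_{\lambda,\tau}$ together with the correct transversal boundary conditions; the remainder is a clean bookkeeping combining Lemmas~\ref{thm:primo} and~\ref{thm:equivgsp} with the adjoint extension map.
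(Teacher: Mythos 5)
Your argument is correct and follows essentially the same route as the paper's proof: the key step in both is the inclusion $\ker A^*_{\lambda,\tau}\subset V_{\lambda,\tau}=\chi_\tau V_\lambda$, obtained by extending an element of $\ker A^*_{\lambda,\tau}$ via the adjoint version of $e_{\lambda,\tau}$ to an element of $\ker A^*_\lambda\subset V_\lambda$ and restricting back, after which the Fredholm duality $\big(\im A_{\lambda,\tau}\big)^\perp=\ker A^*_{\lambda,\tau}$ (which you make explicit by integration by parts, and the paper leaves implicit by referring to the argument of Lemma \ref{thm:primo}) gives the equality.
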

\begin{proof}
The proof of this result follows directly by arguing as in the proof of Lemma \ref{thm:primo} once 
observed that 
\[
 V_{\lambda, \tau} \supset \ker A^*_{\lambda, \tau}.
\]
In fact, if $u \in \ker A_{\lambda, \tau}^*$, then 
$e_{\lambda, \tau}(u) \in \ker A_\lambda^* \subset V_\lambda$. In order to conclude, it is enough to 
observe that $u= \chi_\tau(e_{\lambda, \tau} u) \in V_{\lambda, \tau}$. 
\end{proof}
\begin{lem}\label{thm:terzo}
There exists $\tau_0>0$ such that, if $\tau \geq \tau_0$, then we have 
\begin{equation}
 \norm{\chi_\tau(y)}_2 \geq \dfrac12 \norm{y}_2, \qquad \forall  y \in V_\lambda.
\end{equation}
Thus the linear map $\chi_\tau: V_\lambda \to V_{\lambda, \tau}$ is injective (and hence an isomorphism). 
\end{lem}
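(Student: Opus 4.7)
I would reduce the inequality to a statement about $L^2$--tails of elements of $V_\lambda$. Starting from the identity $\norm{\chi_\tau(y)}_2^2=\norm{y}_2^2-\int_{|t|>\tau}|y(t)|^2\,dt$, the target bound $\norm{\chi_\tau(y)}_2\geq \tfrac{1}{2}\norm{y}_2$ is equivalent to $\int_{|t|>\tau}|y(t)|^2\,dt\leq \tfrac{3}{4}\norm{y}_2^2$. Thus, after normalising, it is enough to prove that $\varepsilon(\tau)\=\sup_{y\in V_\lambda,\, \norm{y}_2=1}\int_{|t|>\tau}|y(t)|^2\,dt$ tends to $0$ as $\tau\to +\infty$, and then pick $\tau_0$ so that $\varepsilon(\tau_0)<\tfrac{3}{4}$.

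For this I would exploit the finite-dimensionality of $V_\lambda$ directly. Fix an orthonormal basis (with respect to the $\hil$--inner product) $\{e_1,\dots,e_d\}$ of $V_\lambda$, with $d\=\dim V_\lambda$. Writing $y=\sum_{i=1}^d c_i e_i$, one has $\norm{y}_2^2=\sum_i|c_i|^2$, and a straightforward Cauchy--Schwarz estimate yields $\int_{|t|>\tau}|y(t)|^2\,dt\leq d\,\norm{y}_2^2\,\max_{1\leq i\leq d}\int_{|t|>\tau}|e_i(t)|^2\,dt$. Because every $e_i\in L^2(\R,\R^n)$, the dominated convergence theorem forces each integral on the right to tend to $0$ as $\tau\to +\infty$. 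Choosing $\tau_0$ so large that $d\cdot\max_i\int_{|t|>\tau_0}|e_i(t)|^2\,dt<\tfrac{3}{4}$ completes the argument for any single $\lambda$, and the claimed injectivity of $\chi_\tau$ is then immediate from the norm estimate, with surjectivity following at once from the defining equality $V_{\lambda,\tau}=\chi_\tau V_\lambda$.

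The step I expect to be the most delicate is upgrading this to a $\tau_0$ that is uniform in $\lambda$ across the interval $[\lambda_0-\epsilon,\lambda_0+\epsilon]$ provided by Lemma \ref{thm:primo}, which appears to be what is required for the subsequent applications in Proposition \ref{thm:3.3}. The continuity of the spectral projector $\lambda\mapsto P_\lambda$ would let me choose the orthonormal frame $\{e_i(\lambda)\}$ continuously in $\lambda$ (for instance by applying Gram--Schmidt to a continuous lift of a local frame of $\im P_\lambda$); after that, the function $(\lambda,\tau)\mapsto d\,\max_i\int_{|t|>\tau}|e_i(\lambda,t)|^2\,dt$ is continuous in $\lambda$, monotonically decreasing in $\tau$, and converges pointwise to $0$. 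Dini's theorem applied on the compact parameter interval then promotes this to uniform decay and furnishes the single $\tau_0$ that works for every $\lambda$ in the neighbourhood. A finite cover of $[0,1]$ by such neighbourhoods, together with the maximum of the resulting local thresholds, yields a uniform $\tau_0$ throughout $[0,1]$.
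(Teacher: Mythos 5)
Your argument is correct, and at its core it rests on the same ingredients as the paper's proof: finite-dimensionality of $V_\lambda$, a frame depending continuously on $\lambda$ coming from the spectral projections $P_\lambda$ of Lemma \ref{thm:primo}, decay of $L^2$-tails, and compactness of the parameter interval. The difference lies in how uniformity is extracted. The paper works directly with $f_\tau(y)=\norm{\chi_\tau(y)}_2$ on the unit-sphere bundle of the (trivial) bundle with fibres $V_\lambda$, notes the pointwise limit $f_\tau(y)\to 1$, and then appeals to compactness and ``uniform continuity'' of $f_\tau$ to get a single $\tau_1$; as written this step is slightly elliptic, since pointwise convergence together with uniform continuity of each fixed $f_\tau$ does not by itself yield uniform convergence in $\tau$ --- one needs either the equicontinuity of the whole family $\{f_\tau\}$ (which does hold, each $f_\tau$ being $1$-Lipschitz) or monotonicity in $\tau$ plus Dini. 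You avoid this subtlety altogether: the Cauchy--Schwarz reduction to a finite orthonormal basis makes uniformity over the unit sphere of $V_\lambda$ automatic, and uniformity in $\lambda$ is then obtained from Dini's theorem applied to the continuous, monotone tail functions $(\lambda,\tau)\mapsto d\,\max_i\int_{|t|>\tau}|e_i(\lambda,t)|^2\,dt$, followed by a finite cover of $[0,1]$ (the same globalisation step the paper performs by compactness). Your version buys a fully explicit threshold criterion and a tighter justification of the uniformity, at the modest price of constructing locally continuous orthonormal frames by Gram--Schmidt, whereas the paper is content with the non-orthonormal frame $e_i(\lambda)=P_\lambda e_i$; the equivalence $\norm{\chi_\tau(y)}_2\geq\tfrac12\norm{y}_2 \Leftrightarrow \int_{|t|>\tau}|y(t)|^2\,dt\leq\tfrac34\norm{y}_2^2$, the injectivity from the norm bound, and the surjectivity from $V_{\lambda,\tau}=\chi_\tau V_\lambda$ are all correctly handled.
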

\begin{proof}
For each $\lambda \in [0,1]$, let $P_\lambda: \hil \to \hil$ be the projector operator onto 
(the finite-dimensional vector space) $V_\lambda$. Let $\{e_1, \dots, e_n\}$ be a unitary 
$n$-frame for $\im P_0$ and, for each $i \in \{1,\dots, n\}$, we let 
\[
 e_i(\lambda)\=P_\lambda e_i.
\]
Thus, there exists $\epsilon_1 >0$ sufficiently small such that 
$\{e_1(\lambda), \dots,e_n(\lambda)\}$ is a $n$-dimensional frame which depends continuously on 
$\lambda \in [0,\epsilon_1]$. We denote by $V$ the total space of the (trivial) vector bundle over $[0,1]$ 
and  by $S_{[0,\epsilon_1]}(V)$ the total space of the sphere bundle over $[0,\epsilon_1]$. We now 
consider the continuous function 
\[
 f_\tau: S_{[0,\epsilon_1]}(V) \to \R \textrm{ defined by } f_\tau(y)\=\norm{\chi_\tau(y)}_2.
\]
By compactness of  $S_{[0,\epsilon_1]}(V)$, the function $f_\tau$ is actually uniformly continuous and 
by the very  definition of $\chi_\tau$, we infer that  
\[
 \lim_{\tau \to +\infty} f_\tau(y)=1. 
\]
By  uniformly continuity of $f_\tau$, we get that there exists $\tau_1>0$ sufficiently large such that 
\[
 f_\tau(y) \geq \dfrac12 \qquad \textrm{ for every } y \in S_{[0,\epsilon_1]}(V) \textrm{ and for every } 
 \tau \geq \tau_1 .
\]
By compactness of $[0,1]$ and by choosing an $\epsilon_0>0$ (maybe smaller than $\epsilon_1$), 
there exists $\tau_0>0$ such that 
\[
 f_\tau(y) \geq \dfrac12 \qquad \textrm{ for every } y \in 
 S_{[0,1]}(V)\textrm{ and } 
 \tau \geq \tau_0
\]
and by this the thesis readily follows. This conclude the proof. 
\end{proof}
\begin{lem}\label{thm:quarto}
We let   $K_{\lambda, \tau}\= E_2\big( \chi_\tau \ker A_\lambda^*\big)$ and let $\tau_0$ as in 
Lemma \ref{thm:terzo}. Then, for every  $\tau \geq \tau_0$,  we get 
 \begin{itemize}
  \item[(i)] $\dim \ker A_\lambda^*= \dim K_{\lambda, \tau}$
  \item[(ii)] $K_{\lambda, \tau} \cap \im A_\lambda =\{0\}$.
 \end{itemize}
\end{lem}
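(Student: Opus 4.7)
The plan is to prove both claims by combining the quantitative injectivity estimate from Lemma \ref{thm:terzo} with the Fredholm orthogonality relation $\im A_\lambda = (\ker A^*_\lambda)^\perp$ in $\hil$, which holds because $A_\lambda$ is a bounded Fredholm operator of index $0$ and therefore has closed range. Both statements will reduce to one short inner-product identity.

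For part (i), I would first recall that by the very construction of $V_\lambda$ in the proof of Lemma \ref{thm:primo} one has $\ker A_\lambda^* \subset V_\lambda$. Since $E_2$ extends a function on $[-\tau,\tau]$ by zero, it is an $L^2$-isometry, so $\norm{E_2 \chi_\tau u}_2 = \norm{\chi_\tau u}_2$. Applying Lemma \ref{thm:terzo} with $y = u \in V_\lambda$ yields $\norm{\chi_\tau u}_2 \geq \tfrac12 \norm{u}_2$ for $\tau \geq \tau_0$, which shows that the linear map $E_2 \circ \chi_\tau : \ker A_\lambda^* \to \hil$ is injective. Since $\ker A_\lambda^*$ is finite dimensional by Fredholmness of $A_\lambda$, conclusion (i) follows from rank--nullity.

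For part (ii), suppose $w \in K_{\lambda,\tau} \cap \im A_\lambda$ admits the two representations $w = E_2 \chi_\tau u$ with $u \in \ker A_\lambda^*$, and $w = A_\lambda v$ with $v \in \dom$. Taking the $L^2(\R)$-inner product of $w$ against $u$ (legitimate since $u$ lies in $\dom$, the natural domain of $A_\lambda^*$) gives, on the one hand, $\langle A_\lambda v, u\rangle = \langle v, A_\lambda^* u\rangle = 0$ by the adjoint relation together with $A_\lambda^* u = 0$; on the other hand, a direct computation yields $\langle E_2 \chi_\tau u, u\rangle = \int_{-\tau}^{\tau} |u(t)|^2\, dt = \norm{\chi_\tau u}_2^2$. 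Hence $\chi_\tau u = 0$, and a second application of Lemma \ref{thm:terzo} forces $u = 0$, whence $w = 0$.

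There is no substantial obstacle beyond ensuring that the inclusion $\ker A_\lambda^* \subset V_\lambda$ is in force, so that the quantitative estimate of Lemma \ref{thm:terzo} is actually available on the kernel; once this is verified, both parts follow at once from the same brief inner-product computation.
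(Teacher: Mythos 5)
Your proposal is correct and follows essentially the same route as the paper: for (i) the injectivity of $\chi_\tau$ on $\ker A_\lambda^*\subset V_\lambda$ furnished by Lemma \ref{thm:terzo} (with $E_2$ an isometric extension by zero), and for (ii) the same inner-product identity $\langle A_\lambda v,u\rangle_2=\langle v,A_\lambda^* u\rangle_2=0$ played against $\langle E_2\chi_\tau u,u\rangle_2=\norm{\chi_\tau u}_2^2$, the only difference being that you argue directly while the paper phrases it as a contradiction.
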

\begin{proof}
In order to prove the first item, it is enough to observe that by taking into account 
Lemma \ref{thm:terzo},  the map 
$\chi_\tau:V_\lambda \to V_{\lambda, \tau}$ is injective and hence also its 
restriction on $\ker A_\lambda^*$. 

In order to prove the second statement, we argue by contradiction as follows. If not, 
there exists $0 \neq y=A_\lambda x$ such that $y \in K_{\lambda, \tau}$. In particular, by definition 
of $K_{\lambda, \tau}$, there exists $\bar y \in \ker A_\lambda^*$ such that $y=E_2\chi_\tau(\bar y)$. 
We now observe that 
\begin{equation}\label{eq:vv1}
 \langle y, \bar y\rangle_2= \langle A_\lambda x, \bar y\rangle_2 = 
 \langle  x, A_\lambda^* \bar y\rangle_2=0
\end{equation}
where the last equality follows by the fact that $\bar y \in \ker A_\lambda^*$. Furthermore 
\begin{equation}\label{eq:vv2}
 \langle y, \bar y\rangle_2= \langle E_2\chi_\tau(\bar y), \bar y \rangle_2
 =\norm{\chi_\tau(\bar y)}^2 \neq 0. 
\end{equation}
Summing up Equation \eqref{eq:vv1} and Equation \eqref{eq:vv2} we get a contradiction. This conclude the 
proof. 
\end{proof}

\begin{lem}\label{thm:quinto}
For $\tau>0 $ sufficiently large there exists $\overline V_\lambda \subset \hil$ finite-dimensional 
subspace such that 
 \begin{equation}
  \overline V_\lambda + \im A_\lambda =\hil
 \end{equation}
\end{lem}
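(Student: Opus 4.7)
The plan is to take $\overline V_\lambda$ to be precisely the finite-dimensional subspace $K_{\lambda,\tau} = E_2\big(\chi_\tau \ker A_\lambda^*\big)$ introduced in Lemma \ref{thm:quarto}, with $\tau \geq \tau_0$ chosen as in Lemma \ref{thm:terzo}. The strategy relies on the codimension accounting supplied by the Fredholmness of $A_\lambda$ together with the two non-trivial facts about $K_{\lambda,\tau}$ that Lemma \ref{thm:quarto} has already established.

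First I would recall that, since $A_\lambda$ is a Fredholm operator of index $0$ from $\dom$ to $\hil$ and has closed range, one has the orthogonal decomposition $\hil = \im A_\lambda \oplus (\im A_\lambda)^\perp$ with $(\im A_\lambda)^\perp = \ker A_\lambda^*$ and in particular $\dim \ker A_\lambda^* = \dim \ker A_\lambda < +\infty$ equals the codimension of $\im A_\lambda$. So in order to produce an algebraic complement of $\im A_\lambda$ in $\hil$, it is enough to exhibit a finite dimensional subspace of $\hil$ whose dimension matches $\dim \ker A_\lambda^*$ and whose intersection with $\im A_\lambda$ is trivial.

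Next I would plug in $\overline V_\lambda \= K_{\lambda,\tau}$. By item (i) of Lemma \ref{thm:quarto}, $\dim \overline V_\lambda = \dim \ker A_\lambda^*$; by item (ii) of the same lemma, $\overline V_\lambda \cap \im A_\lambda = \{0\}$. Combining these two facts with the codimension count of the previous paragraph forces $\overline V_\lambda + \im A_\lambda = \hil$, which is the desired conclusion.

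I do not expect any serious obstacle: all the analytic input (the $L^2$-mass of truncations of $\ker A_\lambda^*$ does not collapse for $\tau$ large, which is what gives injectivity of $\chi_\tau$ and hence the dimension count, and orthogonality with $\im A_\lambda$) has already been extracted in Lemmas \ref{thm:terzo} and \ref{thm:quarto}. The only small subtlety worth mentioning is that one must choose $\tau \geq \tau_0$, where $\tau_0$ is the threshold provided by Lemma \ref{thm:terzo}, so that $\chi_\tau$ is injective on the subspace produced in Lemma \ref{thm:primo} (and therefore on its subspace $\ker A_\lambda^*$); once this is done, the rest is purely a dimensional argument.
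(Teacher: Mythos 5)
Your proof is correct and follows essentially the paper's own route: both rest on the two facts of Lemma \ref{thm:quarto} plus the closed-range codimension count, the only difference being that you take the minimal complement $\overline V_\lambda = K_{\lambda,\tau}=E_2\big(\chi_\tau \ker A_\lambda^*\big)$, whereas the paper sets $\overline V_\lambda = E_2(V_{\lambda,\tau_0})\supseteq K_{\lambda,\tau_0}$, a larger choice that is convenient later in the proof of Proposition \ref{thm:3.3} but is not needed for the lemma itself. Your explicit dimension argument (that $\dim K_{\lambda,\tau}=\dim\ker A_\lambda^*=\operatorname{codim}\im A_\lambda$ together with $K_{\lambda,\tau}\cap \im A_\lambda=\{0\}$ forces $K_{\lambda,\tau}+\im A_\lambda=\hil$) is exactly the step the paper leaves implicit when it invokes Lemma \ref{thm:quarto}.
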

\begin{proof}
By 
taking into account Lemma \ref{thm:equivgsp} we infer also that $\chi_\tau(\ker A_\lambda^*)= 
\ker A_{\lambda, \tau}^*\subset V_{\lambda, \tau}$ and by invoking Lemma \ref{thm:quarto} we infer that for 
every $\tau \geq \tau_0$ we get 
\[
 E_2(V_{\lambda, \tau}) + \im A_\lambda = \hil.
\]
In order to conclude the proof, it is just enough to define 
$\overline V_\lambda\=  E_2(V_{\lambda, \tau_0})$. 
\end{proof}
\paragraph*{Proof of Proposition \ref{thm:3.3}}
By te previous Lemmas, we deduce  that there exists $V_{\lambda,\tau}$ 
and $\overline V_\lambda$ finite dimensional subspaces respectively of $\hiltau$ and $\hil$ such that 
\[
 V_{\lambda, \tau} + \im A_{\lambda, \tau}= \hiltau \textrm{ and } \overline V_\lambda + 
 \im A_\lambda = \hil. 
\]
We now set $E_\lambda=A^{-1}_{\lambda,\tau}V_{\lambda,\tau}$  and 
$\overline E_\lambda=A^{-1}_{\lambda}\overline V_\lambda$. Let $y \in V_{\lambda, \tau}$ and 
$A_{\lambda, \tau}(x)=y$; in particular  $x \in \domtau$ is such that 
\[
\begin{cases}
 \dfrac{d x}{d\tau} -S_\lambda(t)x=y,\quad t \in [-\tau,\tau]\\
 x(-\tau) \in E^u_\lambda(-\tau), \quad  x(\tau) \in E^s_\lambda(\tau).
\end{cases}
\]
We now set $\bar y\= E_2 y$ and we let $A_\lambda^{-1}(\bar x)= \bar y$. By this, readily follows that 
$\bar x = e_{\lambda, \tau} x$ where $e_{\lambda, \tau}$ has been defined in Equation \eqref{eq:estensione}.
By this argument it follows that $e_{\lambda, \tau}: E_{\lambda, \tau} \to \overline  E_\lambda$ is an 
isomorphism with inverse $\chi_\tau: \overline E_\lambda \to E_{\lambda, \tau}$.  We 
also observe that $\chi_\tau: \overline V_\lambda \to V_{\lambda, \tau}$ is an isomorphism with inverse 
$E_2: V_{\lambda, \tau} \to \overline V_\lambda$. 
By the  commutativity of the following diagram 
\[
\xymatrix{
\overline E_\lambda \ar[r]^-{A_\lambda} \ar@/^/[d]^-{\chi_\tau} & \overline V_\lambda 
\ar@/^/[d]^-{\chi_\tau}    \\
E_{\lambda, \tau} \ar@/^/[u]^-{e_{\lambda, \tau}}\ar[r]_-{A_{\lambda,\tau}} & \overline V_{\lambda, \tau}
 \ar@/^/[u]^-{E_2}
}
\]
we get that $w_1(\overline E_\lambda)= w_1(E_{\lambda, \tau})$ and this conclude the proof. \qed

Let us consider the following change of variables obtained by setting 
$s=\dfrac{t+\tau}{2\tau}\in[0,1]$. Then,   the operator 
$A_{\lambda,\tau}$ can be rewritten as follows
\[
A_{\lambda,\tau}=\dfrac{1}{2\tau}\dfrac{d}{ds}-S_\lambda\big((2s-1)\tau\big): 
\mathcal W_{\lambda}(\tau)\subset L^2([0,1];\R^{n})\to L^2([0,1];\R^{n}), 
\]
where 
\[
\mathcal W_{\lambda}(\tau):=\{u\in W^{1,2}([0,1],\R^n) | u(0)\in 
E^u_\lambda(-\tau) \textrm{ and } u(1)\in E^s_\lambda(\tau)
\}. 
\]
For each $\lambda \in [0,1]$, we define the following operator 
\[
\bar{A}_{\lambda,\tau}=\dfrac{d}{ds}-2\tau S_\lambda\big((2s-1)\tau\big)
\]
and we observe that, in contrast with $A_{\lambda, \tau}$,  
$\overline A_{\lambda,\tau}$ is well-defined also for $\tau=0$; in fact 
$\overline A_{\lambda,0}=\dfrac{d}{dt}$ on the  domain $\mathcal W_{\lambda}(0)$ and by taking 
into 
account the homotopy invariant of the parity, we get  
\[
\parity{\bar{A}_{\lambda,\tau}}{\lambda \in 
[0,1]}=\parity{A_{\lambda,\tau}}{\lambda \in [0,1]}.  
\]
\begin{lem}\label{lem:3.4}  
The following equality holds: 
\[ 
\parity{\bar{A}_{\lambda,0}}{\lambda \in 
[0,1]}=\dueind{E^u_\lambda(0)}{E^s_\lambda(0)}{\lambda \in [0,1]}. 
\]
\end{lem}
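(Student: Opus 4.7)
The plan is to compute both sides by finding an explicit finite-dimensional reduction of $\bar A_{\lambda,0}=d/ds$ on $\mathcal W_\lambda(0)$ and then comparing the two matrices that arise, using the finite-dimensional characterization of parity encoded in Proposition \ref{thm:cor2.3} and the definition of the $\Z_2$-index.

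\textbf{Step 1: explicit inversion.} Any $u\in W^{1,2}([0,1],\R^n)$ is determined by $u(0)$ and $u'$ via $u(s)=u(0)+\int_0^s u'(r)\,dr$. Hence $u$ lies in $\mathcal W_\lambda(0)$ iff $u(0)\in E^u_\lambda(0)$ and $u(0)+\int_0^1 u'\in E^s_\lambda(0)$, while $\bar A_{\lambda,0}u=u'$. This identifies
\[
\ker \bar A_{\lambda,0}=\{\textrm{constants in }E^u_\lambda(0)\cap E^s_\lambda(0)\},\qquad \mathrm{Ran}\,\bar A_{\lambda,0}=\Bigl\{f\in L^2 \,:\,\textstyle\int_0^1 f\in E^u_\lambda(0)+E^s_\lambda(0)\Bigr\}.
\]

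\textbf{Step 2: choose the complementing subspace.} Take $V\subset L^2([0,1],\R^n)$ to be the $n$-dimensional subspace of constant functions. Given $g\in L^2$, choose any decomposition $\int_0^1 g=c+r$ with $r\in E^u_\lambda(0)+E^s_\lambda(0)$ (always possible since these together with $V\cong\R^n$ span $\R^n$); then $g-c\in\mathrm{Ran}\,\bar A_{\lambda,0}$. Thus $V+\mathrm{Ran}\,\bar A_{\lambda,0}=L^2$ for every $\lambda\in[0,1]$, so $V$ is admissible in the sense of Definition \ref{def:parity}.

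\textbf{Step 3: identify $E_\lambda=\bar A_{\lambda,0}^{-1}(V)$.} An element $u\in\mathcal W_\lambda(0)$ with $u'$ constant must be affine: $u(s)=(1-s)v+sw$ with $v=u(0)\in E^u_\lambda(0)$, $w=u(1)\in E^s_\lambda(0)$, and $\bar A_{\lambda,0}u=w-v$. So the parametrisation $(v,w)\mapsto u_{v,w}$ yields an isomorphism $E^u_\lambda(0)\oplus E^s_\lambda(0)\cong E_\lambda$ under which the restriction
\[
\bar A_{\lambda,0}\colon E_\lambda\longrightarrow V\cong\R^n,\qquad (v,w)\longmapsto w-v,
\]
is the finite-rank bundle map whose parity we must compute.

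\textbf{Step 4: compute the parity.} Pick continuous frames $\mathcal E_V(\lambda)=\{v_1(\lambda),\dots,v_k(\lambda)\}$ and $\mathcal E_W(\lambda)=\{w_1(\lambda),\dots,w_{n-k}(\lambda)\}$ of $E^u_\lambda(0)$ and $E^s_\lambda(0)$ (they exist because the base $[0,1]$ is contractible), trivialising the bundle $\{(\lambda,u):u\in E_\lambda\}\to[0,1]$. In these trivialisations and in the canonical basis of $V=\R^n$, the bundle map $\bar A_{\lambda,0}\colon E_\lambda\to V$ is represented by the $n\times n$ matrix
\[
N(\lambda)=\bigl[-v_1(\lambda)\,\big|\,\cdots\,\big|\,-v_k(\lambda)\,\big|\,w_1(\lambda)\,\big|\,\cdots\,\big|\,w_{n-k}(\lambda)\bigr],
\]
invertible at $\lambda=0,1$ thanks to assumption (A2). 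By the finite-dimensional description of parity (Proposition \ref{thm:cor2.3} and the clutching construction recalled before Definition \ref{def:parity}), the clutched bundle $E_T$ on $\SSS^1$ is orientable iff $\det N(0)\cdot\det N(1)>0$.

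\textbf{Step 5: compare with the $\Z_2$-index.} Setting $M(\lambda)=[v_1(\lambda)|\cdots|v_k(\lambda)|w_1(\lambda)|\cdots|w_{n-k}(\lambda)]$, one has $\det N(\lambda)=(-1)^k\det M(\lambda)$, so $\det N(0)\det N(1)$ and $\det M(0)\det M(1)$ share the same sign. By Definition \ref{def:z2index} this proves
\[
\parity{\bar A_{\lambda,0}}{\lambda\in[0,1]}=\dueind{E^u_\lambda(0)}{E^s_\lambda(0)}{\lambda\in[0,1]}.
\]
The only delicate point is the bookkeeping in Steps~4--5: checking that the chosen trivialisations are truly allowed in the clutching construction (which they are, since both bundles are trivial over $[0,1]$ and the chosen frames are continuous) and that the sign discrepancy $(-1)^k$ is constant in $\lambda$ and so cancels in the product of the two end-point determinants. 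Everything else is routine book-keeping.
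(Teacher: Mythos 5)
Your proposal is correct and follows essentially the same route as the paper: identify $V$ with the constant functions, characterize $\bar A_{\lambda,0}^{-1}(V)$ as the affine paths joining $E^u_\lambda(0)$ to $E^s_\lambda(0)$, represent the restricted map in continuous frames by the matrix $\bigl[-v_1|\cdots|-v_k|w_1|\cdots|w_{n-k}\bigr]$, and compare endpoint determinants, the $(-1)^k$ factor cancelling. Your Steps 1--2 simply make explicit the admissibility of $V$, which the paper handles via $\ker \bar A^*_{\lambda,0}$ being the constants; the substance is identical.
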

 \begin{proof}   We start to observe that  since 
 $\overline A^*_{\lambda}(0)=-\dfrac{d}{dt}$, its 
kernel  consists of all constant functions. Let us denote by $V$ be the space of all 
constant 
functions on $L^2([0,1],\R^n)$ and we observe that it is isomorphic to $\R^n$.  
By  a direct computation, we get that  the space 
$E_\lambda=\bar{A}_{\lambda,0}^{-1}V$ can be characterised as follows 
\[ 
 E_\lambda=\{z \in W^{1,2}([0,1], \R^n)| z(t)=x+t(y-x),  x\in E^u_\lambda(0), y\in 
E^s_\lambda(0)    \}, 
\] 
and $\bar{A}_{\lambda,0}z=y-x$.   Clearly we have the following isomorphism 
$E_\lambda\approx 
E^u_\lambda(0)\oplus E^s_\lambda(0)$, and 
 \[
 \bar{A}_{\lambda,0}: E^u_\lambda(0)\oplus E^s_\lambda(0)\to \R^n, (x,y)\to 
y-x.   \]
Let $\{e_1(\lambda),\cdots,e_k(\lambda)\}$ be a  frame  of $ 
E^u_\lambda(0)$ and 
$\{e_{k+1}(\lambda),\cdots,e_n(\lambda)\}$ be a  frame of $ 
E^s_\lambda(0)$. 
We set  
\[
T(\lambda)\=\Big[e_1(\lambda)\big|\cdots\big|e_k(\lambda)\big|
e_{k+1}(\lambda)\big|\cdots\big|e_n(\lambda)\Big].
\]
Then,  we have 
\[
\dueind{E^u_\lambda(0)}{E^s_\lambda(0)}{\lambda \in [0,1]}=0 \iff \det\big( 
T(0)\cdot T(1)\big)>0.
\]
We observe that every bundle on the interval is trivial and hence also 
the vector bundle $E$ over $[0,1]$ having fibre $E_\lambda$ is. In particular, the  
trivialisation map is given by 
\[
 [0,1] \times \R^n \ni (\lambda, e_1, \dots, e_n) \mapsto \big(e_1(\lambda), \dots, 
e_n(\lambda)\big).
\]
Thus  we have the isomorphism of $E^u_\lambda(0)\oplus 
E^s_\lambda(0)\equiv\R^n$ and,  under this trivialisation, we get  
\[
\overline A_{\lambda,0}=\Big[-e_1(\lambda)\big|\cdots\big|-e_k(\lambda)\big|
e_{k+1}(\lambda)\big|\cdots\big|e_n(\lambda)\Big]
\]  
and hence 
\[\sgn\big( \det \bar{A}_{0,0} \cdot \bar{A}_{1,0}\big)=
\sgn\big( \det T(0)\cdot T(1)\big)  \]
 which conclude the proof. 
 \end{proof} 
{\bf Proof of Theorem \ref{thm:1.1}}. 
It  readily follows by summing up the conclusion 
proved in Proposition 
\ref{thm:3.3} and Lemma \ref{lem:3.4}. This conclude the proof. \qed

 {\bf Proof of Theorem \ref{thm:1.2}} It follows by  taking into account 
 Theorem \ref{thm:1.1} and by invoking Theorem \ref{thm:1.1} and 
 \cite[Theorem 6.1]{PR98}. This conclude the proof. \qed


\vspace{1cm}
	\noindent
	\textsc{Prof. Xijun Hu}\\
	Department of Mathematics,
	Shandong University\\
	Jinan, Shandong, 250100 \\
	The People's Republic of China,
	China\\
	E-mail: \email{xjhu@sdu.edu.cn}

\vspace{1cm}
\noindent
\textsc{Prof. Alessandro Portaluri}\\
DISAFA,
Università degli Studi di Torino\\
Largo Paolo Braccini 2 \\
10095 Grugliasco, Torino, 
Italy\\
Website: \url{aportaluri.wordpress.com}\\
E-mail: \email{alessandro.portaluri@unito.it}

\vspace{1cm}
\noindent
COMPAT-ERC Website: \url{https://compaterc.wordpress.com/}\\
COMPAT-ERC Webmaster \& Webdesigner: Arch.  Annalisa Piccolo

\end{document}